\newtheorem{thm}{Theorem}[section]
\newtheorem{prop}[thm]{Proposition}
\newtheorem{lem}[thm]{Lemma}
\newtheorem{cor}[thm]{Corollary}
\theoremstyle{remark}
\newtheorem{rem}[thm]{Remark}
\theoremstyle{definition}
\newtheorem{defi}[thm]{Definition}
\renewcommand{\le}{\leqslant}
\renewcommand{\ge}{\geqslant}
\renewcommand{\subset}{\subseteq}
\newcommand{\mcl}{\mathcal}
\newcommand{\msc}{\mathscr}
\newcommand{\B}{\mathbb{B}}
\newcommand{\e}{\mathbf{e}}
\newcommand{\E}{\mathbb{E}}
\newcommand{\N}{\mathbb{N}}
\newcommand{\na}{\nabla}
\newcommand{\Ll}{\left}
\newcommand{\Rr}{\right}
\newcommand{\lhs}{left-hand side}
\newcommand{\1}{\mathbf{1}}
\newcommand{\R}{\mathbb{R}}
\newcommand{\Z}{\mathbb{Z}}
\renewcommand{\P}{\mathbb{P}}
\newcommand{\ov}{\overline}
\newcommand{\un}{\underline}
\newcommand{\eps}{\varepsilon}
\renewcommand{\d}{{\mathrm{d}}}
\renewcommand{\o}{\omega}
\newcommand{\M}{\mcl{M}_q}
\newcommand{\MM}{\mathscr{M}_q}
\newcommand{\Ma}{\mathsf{M}}
\newcommand{\mi}{\mathsf{m}\,}
\renewcommand{\t}{^{(t)}}
\numberwithin{equation}{section}
\title[Anchored Nash inequalities and heat kernel bounds]{Anchored Nash inequalities and heat kernel bounds for static and dynamic degenerate environments}
\author{Jean-Christophe Mourrat, Felix Otto}
\address[Jean-Christophe Mourrat]{Ecole normale sup\'erieure de Lyon, CNRS, Lyon, France}
\address[Felix Otto]{Max Planck Institute for Mathematics in the Sciences, Leipzig, Germany}
\begin{document}

\begin{abstract}

We introduce anchored versions of the Nash inequality. They allow to control the $L^2$ norm of a function by Dirichlet forms that are not uniformly elliptic. We then use them to provide heat kernel upper bounds for diffusions in degenerate static and dynamic random environments. As an example, we apply our results to the case of a random walk with degenerate jump rates that depend on an underlying exclusion process at equilibrium.

\bigskip

\noindent \textsc{MSC 2010:} 26D10, 35K65, 35B65, 60K37.

\medskip

\noindent \textsc{Keywords:} Nash inequality, heat kernel, diffusion in dynamic random medium.

\end{abstract}
\maketitle
%
%
%
%
%
%%%%%%%%%%%%%%%%%%%%%%%%%%%%%%%%%%%%%%%%%%%%%%%%%%%%%%%%%%%%%%
%%%%%%%%%%%%%%%%%%%%%%%%%%%%%%%%%%%%%%%%%%%%%%%%%%%%%%%%%%%%%%
%
%
%
\section{Introduction}
\label{s:intro}

In the study of reversible random walks in degenerate static random environments, heat kernel estimates have played a key role. Consider a random walk evolving on a super-critical (infinite) percolation cluster. As was pointed out in \cite{sidszn}, the problem of showing that the walk satisfies a central limit theorem for almost every realisation of the environment (i.e.\ a ``quenched'' CLT) is reduced to showing a spatially averaged (instead of uniform) sublinear control of the corrector if the heat kernel is known to satisfy a diffusive upper bound. For the case of percolation, such a bound was obtained in \cite{matrem,bar}. This bound indeed became a primary ingredient in the subsequent proofs that the random walk on percolation clusters and other degenerate i.i.d.\ environments satisfies a quenched central limit theorem \cite{sidszn,berbis,matpia,bispre,mathieu, abdh}. (A notable exception is a geometric argument introduced in \cite{berbis}, which in two dimensions does not use any a priori heat kernel bounds.) We refer to \cite{biskup-survey} (in particular, Section~4.3 therein) and \cite{kum} for reviews. Related ideas of regularity theory lead to a proof of the local central limit theorem in~\cite{barham}.

The approach to heat kernel bounds in \cite{matrem} is based on studying isoperimetric inequalities on boxes, with an effective dimension that is slowly tuned up to $d$ as the size of the box increases. In \cite{bar}, upper and lower bounds are proved (including optimal off-diagonal control) using a fine decomposition of the space into good and bad boxes, where a box is said to be good if the Poincaré inequality (with the standard scaling) holds in that box. The case of i.i.d.\ conductances with a polynomial tail near zero is investigated in \cite{bkm} using a combination of probabilistic arguments and Harnack inequalities.

One can also consider environments that do not possess strong decorrelation properties at large distances. Assuming a certain moment condition (from above and below) on the conductances, it was shown in \cite{ger1} that the random walk satisfies a quenched central limit theorem. The proof relies on weighted Sobolev inequalities, and on Moser's iteration scheme. Harnack inequalities and a local CLT were then established in \cite{ger2}, and Gaussian heat kernel bounds in~\cite{ger3}. Related ideas were developped in \cite{fankom1} to prove a quenched invariance principle for Brownian motion subject to an incompressible drift. In a different direction, diffusions on clusters of correlated percolation models such as random interlacements are studied in \cite{prs} via isoperimetric inequalities. 

%The moment condition is shown to be necessary for a local CLT to hold in \cite[Theorem~5.4]{ger2}. 

Some assumptions on the law of the conductivities are necessary for diffusive heat kernel bounds to hold. Indeed, even in the setting of i.i.d.\ conductances, where the quenched CLT is known to hold in full generality \cite{bispre,mathieu,abdh}, diffusive heat kernel upper bounds will not hold if the conductivities are allowed to take very small values~\cite{bbhk}. The basic mechanism is that a portion of space surrounded by a region of low conductivity acts as a trap for the random walk. This phenomenon was analysed with precision in \cite{boukha1,bisbou,blrv}.

%Anchored isoperimetric inequalities were also considered in \cite{thomas, bls}.

\medskip

The aim of this work is to introduce a new method to prove diffusive heat kernel bounds for degenerate environments. Our first main result, obtained in Section~\ref{s:nash}, is an anchored version of the Nash inequality. The standard Nash inequality gives a control of the $L^2$ norm of a function $f$ in terms of $\|\na f\|_2$ and $\|f\|_1$. We show that one can control the $L^2$ norm of $f$ in terms of $\|w \na f\|_2$, $\|f\|_1$ and $\||x|^{p/2} f\|_2$, where $w$ is a weight function that can for instance be taken to be the square root of the conductance field (in particular, it is not bounded away from $0$ in cases of interest). Note that the translation invariance of the standard Nash inequality is broken by the term $\||x|^{p/2} f\|_2$.

We then show in Section~\ref{s:static} that despite this extra term $\||x|^{p/2} f\|_2$, one can recover diffusive heat kernel upper bounds for static degenerate environments satisfying a moment condition. 

Our main motivation is to derive heat kernel bounds for \emph{dynamic} degenerate random environments. We are not aware of previous results in this context, although quenched central limit theorems have been proved in some cases (see in particular~\cite{fankom2}). The dynamics of the environment can facilitate anomalous behaviour of the heat kernel~\cite{buckley}. As a simple example, one can construct a dynamic environment such that the probability of return to the origin decays from $1$ to almost zero, and then climb back up to almost $1/4$. Indeed, take $z$ a neighbour of the origin, and consider that initially, the edge connecting the origin to $z$ is open, and these two sites are disconnected from the rest of the graph. The probability for the walk to be on $z$ goes to $1/2$ as time passes. Then change the environment so that the site $z$ is isolated from the rest, and otherwise every edge is open. The probability for the walk to be at the origin decays to $0$. Then move back to the initial situation with the origin and $z$ connected, and disconnected from the rest: the probability for the walk to be at the origin moves back to being almost $1/4$. 

Let us denote by $p_{s,t}(x,y)$ the probability for the walk started at $x$ at time $s$ to be at $y$ at time $t\ge s$. It is more convenient to aim first for bounds on 
$$
\mcl E_t := \sum_{x \in \Z^d} (p_{0,t}(0,x))^2,
$$
in particular because, as opposed to $p_{0,t}(0,0)$, this quantity is monotone in $t$. 
In Section~\ref{s:dynamic}, we provide a general criterion that allows to prove diffusive bounds of the form
\begin{equation}
\label{e:bound-E}
\mcl E_t \le \frac{\mcl X}{t^{d/2}},
\end{equation}
where $\mcl X$ is a random variable. In order to illustrate the relevance of our criterion, we study in Section~\ref{s:example} an example of a random walk whose (symmetric) jump rates $(a_t(e))$ are a local function of an underlying simple exclusion process at equilibrium. We show that in this case, \eqref{e:bound-E} holds with a random variable $\mcl X$ that has finite moments of every order, regardless of the density of particles in the exclusion process, or of the specific definition of the jump rates. (In particular, it may be that at any given time, the set of sites that are reachable by the random walk is finite.) We also show that for every $r \ge 1$, $\sup_t \E[(t^{d/2} p_{0,t}(0,0))^r]$ is finite, and that for every $\eps > 0$, $\E[(\sup_t t^{d/2 -\eps} p_{0,t}(0,0))^r]$ is also finite. In order for these results to hold, we need only little information on the underlying dynamics. Roughly speaking, we only use the fact that the jump rates are very unlikely to vanish over an extended period of time. In particular, we do not use the fact that the simple exclusion process has a polynomial mixing rate of order $t^{-d/2}$ (as was shown in \cite{berzeg,jlqy}).

\subsection{Notation}

We let $|\cdot|_* = |\cdot| \vee 1$. For any positive integer $r$, we denote by $B_r$ the box $\{-r,\ldots, r\}^d$, and $\B_r$ the set of edges with both end-points in $B_r$. For any $p \in [1,\infty]$, we denote by
$\| \cdot \|_{L^p_r}$ the norm of the space $L^p(B_r)$ or $L^p(\B_r)$ (which of the two will be clear from the context), i.e.\
$$
\| f \|_{L^p_r} = \Ll(\sum_{x \in B_r} |f(x)|^p\Rr)^{1/p} \quad \text{ or } \quad \| f \|_{L^p_r} = \Ll(\sum_{e \in \B_r} |f(e)|^p\Rr)^{1/p},
$$
with the usual interpretation as a supremum if $p = \infty$. We also write
$$
\| f \|_{p} = \Ll(\sum_{x \in \Z^d} |f(x)|^p\Rr)^{1/p} \quad \text{ or } \quad \| f \|_{p} = \Ll(\sum_{e \in \B} |f(e)|^p\Rr)^{1/p}.
$$

%
%
%
%
%
%%%%%%%%%%%%%%%%%%%%%%%%%%%%%%%%%%%%%%%%%%%%%%%%%%%%%%%%%%%%%%
%%%%%%%%%%%%%%%%%%%%%%%%%%%%%%%%%%%%%%%%%%%%%%%%%%%%%%%%%%%%%%
%
%
%
\section{Anchored Nash inequality}
\label{s:nash}
The aim of this section is to prove the following theorem.

\begin{thm}[Anchored Nash inequality]
\label{t:nash}
Let $p \in (d,+\infty)$, $q \in (d,+\infty]$, and $\theta \in [\theta_c,1]$, where $\theta_c \in [0,1)$ is defined by
\begin{equation}
\label{e:def:theta}
\frac{1}{\theta_c} = 1 + \frac{dp+2p}{dp+2d}\Ll(\frac{q}{d} - 1\Rr).
\end{equation}
Define $\alpha, \beta, \gamma \in [0,1)$ by
\begin{equation}
\label{e:def:alpha}
\alpha = (1-\theta) \frac{d}{d+2} + \theta \frac{p}{p+2}, \qquad \beta = (1-\theta) \frac{2}{d+2}, \qquad \gamma = \theta \frac{2}{p+2}.
\end{equation}
There exists $C < \infty$ such that for any functions $f : \Z^d \to \R$ and $w : \B \to \R_+$,
\begin{equation}
\label{e:nash}
\|f\|_{2} \le C \Ll(\mcl{M}_q \,  \|w \, \na f\|_{2}\Rr)^\alpha \ \|f\|_{1}^\beta \ \| |x|_*^{p/2} f \|_{2}^\gamma,
\end{equation}
where
\begin{equation}
\label{e:def:Mq}
\mcl{M}_q^q  = \mcl{M}_q^q(w) = \sup_{r \in \N} \frac{1}{|\B_r|} \sum_{e \in \B_r} w^{-q}(e).
\end{equation}
\end{thm}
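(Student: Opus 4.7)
The plan is to derive the inequality by interpolating between two baseline Nash-type estimates and then introducing the weight $w$ via a local H\"older bound. First I would prove the standard discrete Nash inequality
\[
\|f\|_2 \le C\|\na f\|_2^{d/(d+2)} \|f\|_1^{2/(d+2)}
\]
together with its \emph{anchored} counterpart
\[
\|f\|_2 \le C\|\na f\|_2^{p/(p+2)} \| |x|_*^{p/2} f\|_2^{2/(p+2)},
\]
valid for $p > d$. For the anchored version, the idea is to split $\|f\|_2^2 = \|f\|_{L^2_R}^2 + \|f\|_{L^2(\Z^d \setminus B_R)}^2$, bound the tail by $R^{-p}\| |x|_*^{p/2} f\|_2^2$ (since $|x|_* \ge R$ outside $B_R$), and control the inner term through discrete Sobolev combined with H\"older: $\|f\|_{L^2_R} \le |B_R|^{1/d}\|f\|_{2d/(d-2)} \le CR\|\na f\|_2$. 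Optimizing over $R$ gives the claimed exponents $p/(p+2)$ and $2/(p+2)$.

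Once these two seed inequalities are in hand, writing $\|f\|_2 = \|f\|_2^{1-\theta} \cdot \|f\|_2^\theta$ and applying the two bounds to the two factors produces the unweighted version $\|f\|_2 \le C\|\na f\|_2^\alpha \|f\|_1^\beta \| |x|_*^{p/2} f\|_2^\gamma$ with $\alpha, \beta, \gamma$ exactly as in the theorem. In the uniformly elliptic case $q = \infty$, this already concludes the argument, since $\|\na f\|_2 \le \mcl M_\infty\|w\na f\|_2$ and $\theta_c = 0$.

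For finite $q > d$, I would redo the two baseline inequalities with the $L^\tau$-norm of the gradient, $\tau = 2q/(q+2)$, using the $L^\tau$-version of the discrete Sobolev embedding ($\tau^* = d\tau/(d-\tau)$). The bridge to the weighted gradient is the local H\"older inequality
\[
\|\na f\|_{L^\tau(\B_r)} \le \mcl M_q \|w \na f\|_2 \cdot |\B_r|^{1/q},
\]
which only controls the $L^\tau$-norm of $\na f$ on a bounded set. Applying the $L^\tau$-interpolated bound to a truncation of $f$ to $B_r$, controlling the remainder by the tail estimate $\|f\|_{L^2(\Z^d \setminus B_r)} \le r^{-p/2}\| |x|_*^{p/2} f\|_2$, and then optimizing over $r$ yields the stated inequality. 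The constraint $\theta \ge \theta_c$ is precisely what forces the power of $r$ arising in this last step to vanish, producing an $r$-independent bound.

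The main obstacle, in my view, is this third step: carefully tracking the $L^\tau$-gradient through both the interpolation and the cutoff-and-optimize argument, and ensuring that the boundary terms produced by the truncation can be absorbed. Since $w$ has no a priori upper bound, boundary contributions of the form $\|w f\|_{L^2(\partial B_r)}$ coming from a na\"ive cutoff cannot be directly estimated, so the localization must be arranged so that any such terms are instead absorbed into the anchor norm $\| |x|_*^{p/2} f\|_2$ via the tail estimate.
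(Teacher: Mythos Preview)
Your overall strategy—prove unweighted endpoint inequalities, interpolate, then insert the weight via a local H\"older bound—is different from the paper's, which instead proves the \emph{weighted} inequality directly at the two endpoints $\theta=1$ and $\theta=\theta_c$ and only then interpolates. The key structural difference is that the paper never uses a global Sobolev embedding or a truncation of $f$: it works from the start with the Poincar\'e--Sobolev inequality on boxes, $\|f-\bar f_Q\|_{L^2(Q)}\lesssim\|\na f\|_{L^{p'}(Q)}$ with $1/p'=1/d+1/2$, so that the gradient is already local and the H\"older step $\|\na f\|_{L^{p'}(Q)}\le\|w^{-1}\|_{L^d(Q)}\|w\na f\|_{L^2(Q)}$ costs no boundary terms. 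For $\theta=1$ this is done on a single box $B_R$ (with an annulus-supported test function to convert $\bar f$ into the anchor term), and for $\theta=\theta_c$ it is done on a covering of $B_R$ by sub-boxes of size $r$, followed by a simultaneous optimization in $r$ and $R$.

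This points to the gap in your plan. The boundary issue you flag is exactly why the paper avoids truncation, and your proposed fix (absorbing the cutoff term into $\||x|_*^{p/2}f\|_2$) is not obviously compatible with the exponent bookkeeping. More seriously, your scheme has only one free scale (the truncation radius $r$), whereas reaching the correct threshold $\theta_c$ in the paper genuinely requires two scales: the sub-box size $r$ governs the trade-off with $\|f\|_1$, and the outer radius $R$ governs both the anchor term and the $|B_R|^{1/q}$ factor coming from $\|w^{-1}\|_{L^q(Q)}\le\|w^{-1}\|_{L^q_R}\le|B_R|^{1/q}\mcl M_q$. With a single scale you will recover the $\theta=1$ endpoint but not the full range down to $\theta_c$. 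If you want to salvage your route, replace the truncation by Poincar\'e--Sobolev on boxes and introduce the second scale; at that point the argument essentially converges to the paper's.
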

\begin{rem}
By setting $q = + \infty$ and $\theta = 0$, we recover the usual Nash inequality, provided that $\mcl{M}_\infty$ is finite (i.e.\ that $w^{-1}$ is bounded). For $\theta > 0$ (and thus $\gamma > 0$), the inequality is no longer translation invariant. Indeed, a crucial aspect of the inequality is that in the definition of $\mcl{M}_q$, the supremum is taken only over boxes that are centred around the origin. The price one has to pay for this is the appearance of the extra term $\| |x|_*^{p/2} u \|_{2}^\gamma$, which is of course not translation invariant either.
\end{rem}
\begin{rem}
Clearly, $\alpha + \beta + \gamma = 1$, as must be the case by homogeneity. 
\end{rem}
\begin{rem}
Our proof applies as well to the setting of continuous space. Considerations of scale invariance then show that the exponents $\alpha$, $\beta$ and $\gamma$ have to be of the form \eqref{e:def:alpha} for some~$\theta$.
\end{rem}
\begin{rem}
\label{r:value-of-theta}
As far as our applications are concerned, the precise value of $\theta_c$ is unimportant, the only crucial point being that $\theta_c < 1$.
\end{rem}

Before we turn to the proof of Theorem~\ref{t:nash}, we recall some classical functional inequalities for the reader's convenience. 

\begin{thm}[Poincaré-Sobolev inequalities]
\label{t:poisob}
Let $1 < p < d$ and let $p^\star$ be such that $d/p^\star = d/p-1$. There exists a constant $C < \infty$ such that for every $r$ and $f : B_r \to \R$,
\begin{equation}
\label{e:poisob}
\|f-\ov{f}_r\|_{L^{p^\star}_r} \le C \|\na f\|_{L^p_r},
\end{equation}
where we write
$$
\ov{f}_r = \frac{1}{|B_r|} \sum_{x \in B_r} f(x).
$$
\end{thm}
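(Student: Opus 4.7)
The plan is to reduce \eqref{e:poisob} to its classical continuous counterpart on the cube $Q_r = [-r,r]^d \subset \R^d$. Given $f : B_r \to \R$, I would build a Lipschitz extension $\tilde f : Q_r \to \R$ by multilinear interpolation on each unit cell $x + [0,1]^d$ with corners in $B_r$. Because the extension takes the value $f(x)$ at each lattice vertex $x$ and because the gradient of a multilinear interpolant on a unit cell is a (bounded) linear combination of its edge differences, one obtains the comparabilities
\[
\|\tilde f\|_{L^{p^\star}(Q_r)} \simeq \|f\|_{L^{p^\star}_r}, \qquad \|\nabla \tilde f\|_{L^p(Q_r)} \le C \|\na f\|_{L^p_r},
\]
with constants depending only on $d$ and $p$. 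Likewise, the continuous mean $|Q_r|^{-1} \int_{Q_r} \tilde f$ differs from $\ov{f}_r$ by at most a multiple of $\|\na f\|_{L^p_r}$ after one application of H\"older's inequality.

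Next I would invoke the classical continuous Poincaré--Sobolev inequality on the Lipschitz domain $Q_r$ (see, e.g., Gilbarg--Trudinger), which asserts
\[
\Ll\| \tilde f - |Q_r|^{-1}\!\int_{Q_r} \tilde f \Rr\|_{L^{p^\star}(Q_r)} \le C \|\nabla \tilde f\|_{L^p(Q_r)}.
\]
The crucial point is that the constant $C$ can be chosen independent of $r$: rescaling $Q_r$ to $Q_1$ contributes a factor $r^{d/p^\star - d/p + 1}$, and this equals $1$ precisely because $d/p^\star = d/p - 1$. Combining this with the three comparabilities above, and swapping continuous for discrete means using the additional bound noted in the previous paragraph, delivers~\eqref{e:poisob}.

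The main obstacle is the bookkeeping around the extension $\tilde f$: one has to verify uniformly in $r$ that the multilinear (or equivalently, Kuhn-simplex-linear) interpolant has both its $L^{p^\star}$ norm controlled by the discrete $L^{p^\star}$ norm of $f$ and its gradient controlled edge-by-edge. Neither is difficult, but both must be stated carefully since $r$ ranges over all positive integers. An alternative route, avoiding continuous tools altogether, is to combine the discrete isoperimetric inequality on $B_r$ with the coarea formula and the layer-cake identity; this yields the weak-type endpoint $p=1$, from which \eqref{e:poisob} for general $p \in (1,d)$ follows by the standard truncation-and-summation argument of Maz'ya. Given that the statement is classical and recorded here only "for the reader's convenience", citing a reference in lieu of a detailed proof is perfectly reasonable.
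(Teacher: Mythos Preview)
Your proposal is correct, and the reduction to the continuous inequality via multilinear interpolation is a standard and legitimate route. One minor point: the comparison of means is actually easier than you suggest---once you have $\|f-c\|_{L^{p^\star}_r}\le C\|\na f\|_{L^p_r}$ for \emph{some} constant $c$ (here the continuous average), the triangle inequality together with $|B_r|^{1/p^\star}|\ov f_r-c|\le\|f-c\|_{L^{p^\star}_r}$ lets you replace $c$ by $\ov f_r$ at the cost of a factor of~$2$; no separate gradient bound is needed.

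The paper takes a genuinely different, purely discrete route. For each pair $x,y\in B_r$ it builds a nearest-neighbour path $\gamma_{x,y}$ hugging the straight segment $[x,y]$, writes $f(x)-\ov f_r$ as an average of edge increments along these paths, and then counts how many paths through a fixed edge $e$ can end at a given $x$: at most $Cr\big(r/(1+|x-\un e|)\big)^{d-1}$. This yields the pointwise Riesz-potential bound
\[
|f(x)-\ov f_r|\lesssim\sum_{e\in\B_r}(1+|x-\un e|)^{-(d-1)}|\na f|(e),
\]
from which \eqref{e:poisob} follows by Hardy--Littlewood--Sobolev. So both arguments ultimately import a continuous harmonic-analysis fact (yours the Sobolev inequality on cubes, the paper's HLS), but the paper's version stays lattice-based up to that last step and delivers the explicit pointwise estimate above, which is of independent interest and is what makes the generalisations in Remarks~\ref{r:gener-poisob} and~\ref{r:gener2} immediate. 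Your approach, by contrast, is quicker to state and makes the scale-invariance (hence $r$-independence of $C$) completely transparent.
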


\begin{proof}
For every $x,y \in B_r$, we give ourselves a path $\gamma_{x,y}$ of nearest-neighbour points of $\Z^d$ from $x$ to $y$ that stays as close as possible to the segment joining $x$ and $y$. To be precise, by symmetry we may assume that $y \ge x$ (i.e.\ each coordinate of $y$ is larger than the corresponding coordinate of $x$). In this case, we construct a path $x_0 = x, x_1, \ldots, x_l = y$ as follows. We write $I_{x,y}$ for the line segment joining $x$ and $y$ in $\R^d$. Assuming that $x_0, \ldots, x_j$ are already built, we define
$$
i = \arg \min \{ \mathrm{dist} (x_j+\e_{i'},I_{x,y}), \ 1 \le i' \le d \},
$$
(take the smallest one in case of ties), and put $x_{j+1} = x_j + \e_i$. Such a path stays at distance at most $\sqrt{d}$ from $I_{x,y}$, and has no self-intersections. 
We have
$$
\Ll|f(y) - f(x)\Rr| \le \sum_{e \in \gamma_{x,y}} |\na f|(e),
$$
with obvious notation. Since
\begin{equation}
\label{unif-average}
f(x) - \ov{f}_r = \frac{1}{|B_r|} \sum_{y \in B_r} (f(x)-f(y)),
\end{equation}
the triangle inequality yields
$$
\Ll|f(x) - \ov{f}_r\Rr| \le \frac{1}{|B_r|} \sum_{\substack{ y \in B_r \\ e \in \gamma_{x,y}}} |\na f|(e).
$$
One can check that there exists a constant $C$ (independent of $r$) such that for every $x \in B_r$ and $e \in \B$,
$$
\Ll| \Ll\{ y \in B_r : e \in \gamma_{x,y}  \Rr\} \Rr| \le C r \Ll( \frac{r}{1+|x-\un{e}|} \Rr)^{d-1}.
$$
As a consequence,
\begin{equation}
\label{e:HLS}
\Ll|f(x) - \ov{f}_r\Rr| \lesssim \sum_{e \in \B_r} (1+ |x-\un{e}|)^{-(d-1)} |\na f|(e).
\end{equation}
The desired result now follows from the Hardy-Littlewood-Sobolev inequality, see e.g.\ \cite[Theorems~1.5 and 1.7]{BCD}.
\end{proof}

\begin{rem}
\label{r:gener-poisob}
Theorem~\ref{t:poisob} can be generalised in the following way. Under the same conditions, if $g : B_r \to \R$ is such that $\ov{g}_r = 1$, then
$$
\|f-\ov{(fg)}_r\|_{L^{p^\star}_r} \le C \|g\|_{L^\infty_r} \|\na f\|_{L^p_r}.
$$
Indeed, it suffices to 
%replace \eqref{unif-average} by
%$$
%f(x) - \ov{(fg)}_r = \frac{1}{|B_r|} \sum_{y \in B_r} (f(x)-f(y)) g(y),
%$$
%and proceed as before, bounding $g(y)$ by $\|g\|_{L^\infty_r}$.
observe that
$$
\Ll| \ov{(fg)}_r - \ov f_r \Rr| = \Ll| \frac{1}{|B_r|} \sum_{x \in B_r} g(x) \Ll(  f(x) - \ov f_r \Rr) \Rr| \le \|g\|_{L^\infty_r} \, \frac{1}{|B_r|^{{1/p^\star}}} \|f-\ov{f}_r\|_{L^{p^\star}_r},
$$
and to use the triangle inequality and Theorem~\ref{t:poisob} (with $\|g\|_{L^{\infty}_r} \ge 1$).
\end{rem}
In dimension $2$, we will also need to use inequality \eqref{e:poisob} in the critical case $p^\star = 2$, $p = 1$. The following theorem gives the result in any dimension.
\begin{thm}[Isoperimetric inequality]
\label{t:isop}
Let $p^\star = d/(d-1)$. There exists $C < \infty$ such that for every $r$ and $f : B_r \to \R$,
$$
\|f-\ov{f}_r\|_{L^{p^\star}_r} \le C \|\na f\|_{L^1_r}.
$$
\end{thm}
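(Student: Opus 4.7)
The plan is to combine a discrete coarea formula with a relative isoperimetric inequality on the box $B_r$. The geometric input is the standard fact that there exists $c = c(d) > 0$ such that for every $r \in \N$ and every $A \subset B_r$,
\[
\min(|A|, |B_r \setminus A|)^{(d-1)/d} \le c\, |\partial_{B_r} A|,
\]
where $\partial_{B_r} A$ denotes the set of edges of $\B_r$ with exactly one endpoint in $A$. I would establish this $r$-independent bound by embedding $B_r$ in a discrete torus of comparable size and transferring the usual torus isoperimetry, or equivalently by reducing to the continuous relative isoperimetric inequality on the unit cube.

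Granting this, I would first pass from the mean to a median $m$ of $f$, i.e., a value for which $|\{f \ge m\}| \ge |B_r|/2$ and $|\{f \le m\}| \ge |B_r|/2$. Since $\ov f_r - m$ is a constant, Hölder's inequality yields
\[
\|\ov f_r - m\|_{L^{d/(d-1)}_r} = |B_r|^{(d-1)/d}\,|\ov f_r - m| \le |B_r|^{(d-1)/d-1} \|f-m\|_{L^1_r} \le \|f-m\|_{L^{d/(d-1)}_r},
\]
so by the triangle inequality it is enough to prove $\|f-m\|_{L^{d/(d-1)}_r} \le C \|\na f\|_{L^1_r}$.

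Decompose $f - m = g_+ - g_-$ with $g_\pm := (f-m)_\pm$. Both are nonnegative and, by the median property, supported on sets of cardinality at most $|B_r|/2$; the argument below will treat $g := g_+$, with $g_-$ handled identically. The layer-cake representation $g(x) = \int_0^\infty \1_{\{g > t\}}(x)\,\d t$ together with Minkowski's inequality gives
\[
\|g\|_{L^{d/(d-1)}_r} \le \int_0^\infty |\{g > t\}|^{(d-1)/d}\,\d t.
\]
Since $|\{g > t\}| \le |B_r|/2$ for every $t > 0$, the relative isoperimetric inequality bounds the integrand by $c\,|\partial_{B_r} \{g > t\}|$. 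The discrete coarea identity
\[
\int_0^\infty |\partial_{B_r}\{g > t\}|\,\d t = \sum_{e = \{x,y\} \in \B_r} |g(x) - g(y)| \le \sum_{e \in \B_r} |\na f|(e) = \|\na f\|_{L^1_r}
\]
(where the inequality uses that $t \mapsto (t-m)_+$ is $1$-Lipschitz) then closes the estimate.

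The main obstacle is producing the relative isoperimetric inequality on $B_r$ with a constant that does not degenerate as $r \to \infty$; the remainder is the Federer--Fleming / De~Giorgi argument transplanted to the discrete lattice.
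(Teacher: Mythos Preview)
Your proof is correct and follows essentially the same route as the paper's: reduce from the mean to a median, split into positive and negative parts, apply Minkowski to the layer-cake representation, invoke the relative isoperimetric inequality on $B_r$ (which the paper also takes as a cited input), and close with the discrete coarea identity. Your mean-to-median reduction via H\"older is a minor variant of the paper's Chebyshev-type bound on $|m-\ov f_r|$, but otherwise the arguments coincide.
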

\begin{proof}
Let $m$ be a median of $f$. It suffices to show that
\begin{equation}
\label{e:isop-suff}
\|f-m\|_{L^{p^\star}_r} \le C \|\na f\|_{L^1_r}.
\end{equation}
Indeed, if $m > \ov f_r$, then by the defining property of the median,
$$
\frac{|B_r|}{2} \le |\{x \in B_r : f(x) \ge m\}| \le \frac{1}{(m-\ov f_r)^{p^\star}} \|f-\ov f_r\|_{L^{p^\star}_r}^{p^\star}.
$$
If $m < \ov f_r$, the symmetric argument shows that in every case, 
$$
|m-\ov f_r| \le \frac{2}{|B_r|^{1/{p^\star}}} \|f-\ov f_r\|_{L^{p^\star}_r}.
$$
By the triangle inequality, it thus suffices to show \eqref{e:isop-suff}. Without loss of generality, we may assume that $m = 0$. Let $g = f \vee 0$. Since $|\nabla g| \le |\nabla f|$, it suffices to show that
\begin{equation}
\label{e:isop-suf2}
\|g\|_{L^{p^\star}_r} \le C \|\na g\|_{L^1_r} \qquad (g \text{ s.t. } |\{x \in B_r : g(x) = 0\}| \ge |B_r|/2).
\end{equation}
For $t \ge 0$, let $\msc L_t = \{x \in B_r : g(x) > t\}$. Since $g = \int_0^\infty \1_{\msc L_t} \, \d t$, we have by Minkowski's inequality
$$
\|g\|_{L^{p^\star}_r} \le \int_0^\infty |\msc L_t|^{1/{p^\star}} \, \d t.
$$
The isoperimetric inequality \cite[Theorem~3.3.15]{sc} ensures that there exists $C < \infty$ such that for every $A \subset B_r$ with $|A| \le |B_r|/2$, we have $|A|^{(d-1)/d} \le C |\partial A|$, where
$$
\partial A = \{e = \{x,y\} \in \B_r : x \in A \text{ and } y \notin A\}.
$$
Hence,
\begin{align*}
\|g\|_{L^{p^\star}_r} & \le C \int_0^\infty |\partial \msc L_t| \, \d t \\
 & = C \int_0^\infty \sum_{x \sim y: g(x) < t \le g(y)} 1 \, \d t \\
 & = C \sum_{x \sim y : g(x) < g(y)} (g(y) - g(x)) = C \|\nabla g\|_{L^1_r},
\end{align*}
which concludes the proof.
\end{proof}
\begin{rem}
\label{r:gener2}
Under the conditions of Theorem~\ref{t:isop}, if $g : B_r \to \R$ is such that $\ov g_r = 1$, then 
$$
\|f-\ov{(fg)}_r\|_{L^{p^\star}_r} \le C \|g\|_{L^\infty_r} \|\na f\|_{L^1_r}.
$$
Indeed, the proof is identical to that of Remark~\ref{r:gener-poisob}.
\end{rem}

%\begin{rem}
%\label{r:gener2}
%We can also state Theorem~\ref{t:poisob} for other values of $p$ and $p^\star$. Indeed, let $p, p^\star \in [1,\infty]$ be such that $d/p^\star < d/p - 1$. There exists a constant $C < \infty$ such that for every $r$, $f : B_r \to \R$ and $g : B_r \to \R$ satisfying $\ov g_r = 1$, we have
%$$
%\|f-\ov{(fg)}_r\|_{L^{p^\star}_r} \le C \|g\|_{L^\infty_r} \|\na f\|_{L^p_r}.
%$$
%Indeed, it suffices to follow the same reasoning as for the previous remark, and simply apply Young's inequality in \eqref{e:HLS} (see e.g.\ \cite[Lemma~1.4]{BCD}). Note however that this result crucially relies on the discrete nature of the space.
%\end{rem}

\begin{proof}[Proof of Theorem~\ref{t:nash}] The proof will be divided into two steps. We show the result for $\theta = 1$ in the first step, and for $\theta = \theta_c$ in the second step. The full result then follows by interpolation.

\medskip

\noindent \emph{Step 1.} We show that the theorem holds for $\theta = 1$. Let $p'$ be such that
\begin{equation}
\label{e:def:p}
\frac{1}{p'} = \frac1d + \frac12.
\end{equation}
Let $R$ be an even positive integer. We define a function $g : B_R \to \R_+$ that is $0$ in $B_{R/2}$, and is constant on the complementary ``annulus'' $A_R := B_R \setminus B_{R/2}$. The constant is specified by imposing $\ov{g}_R = 1$. We use the Poincaré-Sobolev inequality in the form given by Remark~\ref{r:gener-poisob} in dimension $d \ge 3$, and the isoperimetric inequality in the form given by Remark~\ref{r:gener2} when $d = 2$, to derive
$$
\|f - \ov{(fg)}_R\|_{L^2_{R}} \lesssim \|\na f\|_{L^{p'}_R}.
$$
In view of the definition of $g$, we have 
$$
\|\ov{(fg)}_R\|_{L^2_{R}} \lesssim |B_R|^{-1/2} \ \|f\|_{L^1(A_R)} \le \|f\|_{L^2(A_R)} \le R^{-p/2}\ \||x|_*^{p/2} f\|_{L^2_R},
$$
where we used Jensen's inequality for the second inequality. By Hölder's inequality (and recalling \eqref{e:def:p}), we also have
$$
\|\na f\|_{L^{p'}_R} \le \|w^{-1}\|_{L^d_R} \ \|w \na f\|_{L^2_R}.
$$
By Jensen's inequality (and since $q \ge d$), 
$$
\|w^{-1}\|_{L^d_R} \le |B_R|^{1/d-1/q} \ \|w^{-1}\|_{L^q_R} \le |B_R|^{1/d} \, \mcl{M}_q,
$$
with $\mcl{M}_q$ as in \eqref{e:def:Mq}. We thus get
$$
\|f\|_{L^2_R} \lesssim |B_R|^{1/d} \, \M \ \|w \na f\|_{L^2_R} + R^{-p/2}\ \||x|_*^{p/2} f\|_{L^2_R}.
$$
Clearly,
\begin{equation}
\label{e:step10}
\|f\|_{L^2(\Z^d \setminus B_R)} \le R^{-p/2} \||x|_*^{p/2} f\|_{L^2},
\end{equation}
so we established
\begin{equation}
\label{e:step11}
\|f\|_{L^2} \lesssim |B_R|^{1/d} \, \M \ \|w \na f\|_{L^2} + R^{-p/2}\ \||x|_*^{p/2} f\|_{L^2},
\end{equation}
uniformly over $R$ even positive integer. It suffices to change the implicit multiplicative constant in \eqref{e:step11} to ensure that the inequality
\begin{equation}
\label{e:step11.5}
\|f\|_{L^2} \lesssim R \, \M \ \|w \na f\|_{L^2} + R^{-p/2}\ \||x|_*^{p/2} f\|_{L^2}
\end{equation}
remains valid for every real $R \ge 1$. Since $\|f\|_{L^2} \le \||x|_*^{p/2} f\|_{L^2}$, the inequality extends to all $R > 0$. Optimizing over $R$, we obtain
$$
\|f\|_{L^2} \lesssim \Ll(\M \ \|w \na f\|_{L^2}\Rr)^{p/(p+2)} \ \||x|_*^{p/2} f\|_{L^2}^{2/(p+2)},
$$
as desired.

\medskip 

\noindent \emph{Step 2.} We now show that Theorem~\ref{t:nash} holds for $\theta = \theta_c$.

Let $Q$ be a box of size $r \in \N$, We use again $p'$ as in \eqref{e:def:p}.
By the Poincaré-Sobolev (or isoperimetric) inequality,
$$
\|f-\ov{f}_Q\|_{L^2(Q)} \lesssim \|\na f\|_{L^{p'}(Q)},
$$
where we denote by $\ov{f}_Q$ the average of $f$ over the box $Q$. 
By \eqref{e:def:p} and Hölder's inequality, we have
$$
\|\na f\|_{L^{p'}(Q)} \le \|w^{-1}\|_{L^d(Q)} \, \|w \na f \|_{L^2(Q)},
$$
while 
$$
\|\ov{f}_Q\|_{L^2(Q)} \le |Q|^{-1/2} \, \|f\|_{L^1(Q)}, 
$$
so that
$$
\|f\|_{L^2(Q)} \lesssim  \|w^{-1}\|_{L^d(Q)} \, \|w \na f \|_{L^2(Q)} + |Q|^{-1/2} \, \|f\|_{L^1(Q)},
$$
and thus
\begin{equation}
\label{e:step12}
\|f\|^2_{L^2(Q)} \lesssim  \|w^{-1}\|^2_{L^d(Q)} \, \|w \na f \|^2_{L^2(Q)} + |Q|^{-1} \, \|f\|^2_{L^1(Q)}.
\end{equation}
Let $R \in \N$, $R \ge r$. By Jensen's inequality,
$$
|Q|^{-1/d} \, \|w^{-1}\|_{L^d(Q)} \le |Q|^{-1/q} \, \|w^{-1}\|_{L^q(Q)}.
$$
For $Q \subset B_R$, $\|w^{-1}\|_{L^q(Q)} \le \|w^{-1}\|_{L^q_R} \le |B_R|^{1/q} \, \M$,  and we have
\begin{equation}
\label{e:step13}
\|w^{-1}\|_{L^d(Q)} \le |Q|^{1/d - 1/q} \, |B_R|^{1/q} \, \M.
\end{equation}
If $(Q'_i)$ is a finite collection of pairwise disjoint boxes, then 
$$
\sum_i \|f\|^2_{L^1(Q'_i)} \le \|f\|_{L^1(\cup Q'_i)} \sum_i\|f\|_{L^1(Q'_i)} = \|f\|^2_{L^1(\cup Q'_i)}.
$$
We cover the box $B_R$ by sub-boxes $(Q_i)$ of size~$r$. We can do so in such a way that no point of $B_R$ is covered by more than $2^d$ sub-boxes, so that
$$
\sum_i \|f\|^2_{L^1(Q_i)} \lesssim \|f\|^2_{L^1_R}.
$$
Combining this and \eqref{e:step13} into \eqref{e:step12} thus yields
$$
\|f\|^2_{L^2_R} \le  \sum_i \|f\|^2_{L^2(Q_i)} \lesssim |Q|^{2/d-2/q} \, |B_R|^{2/q} \, \M^2 \, \|w \na f \|^2_{L^2_R} + |Q|^{-1} \, \|f\|^2_{L^1_R},
$$
where we now simply use $|Q|$ to denote the cardinality of a box of size $r$, without reference to a specific box. Using \eqref{e:step10} (and $|Q|\ge r$), we obtain
$$
\|f\|^2_{L^2} \lesssim |Q|^{2/d-2/q} \, |B_R|^{2/q} \, \M^2 \, \|w \na f \|^2_{L^2} + r^{-d} \, \|f\|^2_{L^1} + R^{-p} \||x|_*^{p/2} f\|_{L^2}^2.
$$
By changing the implicit multiplicative constant in this inequality, we can ensure that
\begin{equation}
\label{e:to-be-optmi}
\|f\|_{L^2}  \lesssim r^{1-\frac{d}{q}} \, R^{\frac{d}{q}} \, \M \, \|w \na f \|_{L^2} + r^{-d/2} \, \|f\|_{L^1} + R^{-p/2} \, \||x|_*^{p/2} f\|_{L^2}
\end{equation}
holds uniformly over $R \ge r \ge 1$. In view of \eqref{e:step11.5}, we can extend this to all $R \ge 1$, $r \ge 1$. Since we clearly have $\|f\|_{L^2} \le \|f\|_{L^1}$ and $\|f\|_{L^2} \le \||x|_*^{p/2} f\|_{L^2}$, the inequality in fact holds for every $R > 0$ and $r > 0$. The conclusion will now follow by optimizing over $R$ and $r$. We summarize this optimization into the following lemma, whose proof is postponed to the end of this section.
\begin{lem}
\label{l:opt} 
Let $a,a',b,c > 0$, and define $\sigma = ab+a'c+bc$. For every $A, B,D \ge 0$,
$$
\inf_{r,R > 0} \Ll(R^a \, r^{a'} \, A + r^{-b} \, B + R^{-c} \, D\Rr) \le 3 A^{bc/\sigma} B^{a'c/\sigma} D^{ab/\sigma}.
$$

\end{lem}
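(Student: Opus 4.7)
The plan is to balance the three terms on the left-hand side by a suitable choice of $R$ and $r$; the infimum is then bounded by three times their common value. Write
\[
T_1(R,r) = R^a\, r^{a'}\, A, \qquad T_2(r) = r^{-b}\, B, \qquad T_3(R) = R^{-c}\, D,
\]
and search for $R,r > 0$ making $T_1 = T_2 = T_3 = V$ for some common value $V > 0$. First I would use $T_2 = V$ and $T_3 = V$ to solve explicitly
\[
r = (B/V)^{1/b}, \qquad R = (D/V)^{1/c},
\]
which determines $r,R$ once $V$ is chosen.

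Next, substituting these into the equation $T_1 = V$ gives a single scalar relation for $V$:
\[
A \cdot B^{a'/b} \cdot D^{a/c} \;=\; V^{1 + a'/b + a/c}.
\]
A short bookkeeping step shows that the exponent on the right equals $\sigma/(bc)$ with $\sigma = ab + a'c + bc$, so
\[
V \;=\; A^{bc/\sigma}\; B^{a'c/\sigma}\; D^{ab/\sigma},
\]
which is precisely the right-hand side of the lemma divided by $3$. Evaluated at this specific $(R,r)$, the left-hand side is $T_1 + T_2 + T_3 = 3V$, which upper bounds the infimum and settles the case $A,B,D > 0$.

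It remains to handle the degenerate case where at least one of $A,B,D$ vanishes. Then the right-hand side is $0$; the corresponding term on the left is absent, and by sending $R \to \infty$ and/or $r \to \infty$ in the appropriate order (so that $r^{-b}B$ and $R^{-c}D$ both vanish in the limit whenever their coefficients are nonzero, while the missing $A$-, $B$- or $D$-term keeps the first summand small), the remaining terms tend to $0$, so the infimum is $0$. The only nontrivial piece is the exponent arithmetic identifying $1 + a'/b + a/c$ with $\sigma/(bc)$, which is routine; I do not anticipate any genuine obstacle.
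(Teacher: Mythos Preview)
Your proof is correct and follows essentially the same approach as the paper: balance the three terms and read off the common value. The only slip is in the degenerate case: if, say, $B=0$ while $A,D>0$, sending $R\to\infty$ and $r\to\infty$ does \emph{not} kill $R^a r^{a'}A$; you instead need $r\to 0$ fast enough relative to $R\to\infty$ (and symmetrically $R\to 0$ when $D=0$). This is a trivial repair, and the paper itself simply asserts ``it suffices to consider the case when none of $A,B,D$ are zero'' without further comment.
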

In our context, we let
$$
a = \frac{d}{q}, \qquad a' = 1-\frac{d}{q}, \qquad b = \frac{d}{2}, \qquad c = \frac{p}{2},
$$
then $\sigma = ab+a'c+bc$, and finally 
$$
\alpha = \frac{bc}{\sigma}, \qquad \beta = \frac{a'c}{\sigma}, \qquad \gamma = \frac{ab}{\sigma}.
$$
Applying Lemma~\ref{l:opt} to \eqref{e:to-be-optmi} leads to \eqref{e:nash}. 
In order to check that $\alpha, \beta$ and $\gamma$ can be rewritten as in \eqref{e:def:alpha} for some $\theta$, we observe that
$$
(d+2) a'c + (p+2) ab = 2 a'c + 2ab + d\Ll(1-\frac{d}{q}\Rr)\frac{p}{2} + p\frac{d}{q}\frac{d}{2} = 2\sigma,
$$
so that
\begin{equation}
\label{e:bary}
\frac{d+2}{2}\, \beta + \frac{p+2}{2} \, \gamma = 1.
\end{equation}
This, together with the fact that $\alpha + \beta + \gamma = 1$, ensures a representation of the form \eqref{e:def:alpha} for some $\theta$. The value of $\theta$ can be recovered from 
\begin{multline*}
\frac{1}{\theta} = \frac{2}{p+2}\frac{1}{\gamma} = \frac{2}{p+2} \frac{\sigma}{ab} = \frac{2}{p+2}\Ll[1 + \frac{2q}{d^2}\Ll( \Ll(1-\frac{d}{q}\Rr)\frac{p}{2} + \frac{dp}{4} \Rr) \Rr] \\
= \frac{2}{p+2}\Ll[1 + \frac{p}{2} + \Ll(\frac{p}{d} + \frac{p}{2}\Rr)\Ll(\frac{q}{d} - 1 \Rr) \Rr],
\end{multline*}
and we recognize that $\theta = \theta_c$ as defined by \eqref{e:def:theta}.
\end{proof}

\begin{proof}[Proof of Lemma~\ref{l:opt}]
It suffices to consider the case when none of $A$, $B$ and $D$ are zero. We simply choose $r$ and $R$ so that the contributions of the three terms in the sum to be minimized are equal, that is,
\begin{equation}
\label{e:opt}
R^a \, r^{a'} \, A = r^{-b} \, B = R^{-c} \, D.
\end{equation}
The second equality gives
$$
R = \Ll(\frac{D}{B}\Rr)^{1/c} r^{b/c},
$$
while by the first equality,
$$
\Ll(\frac{D}{B}\Rr)^{a/c} r^{ab/c+a'+b} = \frac{B}{A}.
$$
This leads to 
$$
r^{\sigma} = A^{-c} \, B^{c+a} \, D^{-a},
$$
and thus
$$
r^{-b} \, B = A^{bc/\sigma} \, B^{1-\frac{ab+bc}{\sigma}} \, D^{ab/\sigma},
$$
and the lemma follows by \eqref{e:opt}.
\end{proof}

%
%
%
%
%
%%%%%%%%%%%%%%%%%%%%%%%%%%%%%%%%%%%%%%%%%%%%%%%%%%%%%%%%%%%%%%
%%%%%%%%%%%%%%%%%%%%%%%%%%%%%%%%%%%%%%%%%%%%%%%%%%%%%%%%%%%%%%
%
%
%

%
%
%
%
%
%%%%%%%%%%%%%%%%%%%%%%%%%%%%%%%%%%%%%%%%%%%%%%%%%%%%%%%%%%%%%%
%%%%%%%%%%%%%%%%%%%%%%%%%%%%%%%%%%%%%%%%%%%%%%%%%%%%%%%%%%%%%%
%
%
%
\section{Static environments}
\label{s:static}

We now show how to use the anchored Nash inequality derived in the previous section to deduce heat kernel bounds for static, degenerate environments.

\begin{defi}
\label{def:moderate}
Let $w : \B \to \R_+$. For a static environment $a : \B \to [0,1]$, we define $p_t(x,\cdot)$ to be the unique bounded function such that $p_0(x,y)  = \1_{x=y}$ and
$$
\partial_t p_t(x,y)  = \sum_{z \sim y} a(\{y,z\}) \Ll(p_t(x,z) - p_t(x,y)\Rr).
$$ 
In probabilistic terms, $p_t(x,y)$ is the probability for the random walk in the environment $a$ started at $x$ to be at $y$ at time $t$.
We write $u_t = p_t(0,\cdot)$ and
$$
\mcl{D}_t = \|\sqrt{a} \, \na u_t\|_2^2.
$$
We say that the static environment $a$ is $w$\emph{-moderate} if for every $t\ge0$,
\begin{equation}
\label{e:cond-stat1}
\|w \, \na u_t\|_2^2 \le \mcl{D}_t.
\end{equation}
\end{defi}

\begin{thm}[Upper bound for static environments]
For every $q > d$ and every $\alpha,\beta,\gamma \in (0,1)$ as in Theorem~\ref{t:nash}, there exists $C < \infty$ such that if the static environment~$a$ is $w$-moderate, then for every $t \ge 1$,
$$
p_t(0,0) \le C \  \frac{\M^{2\alpha/\beta}}{t^{d/2}},
$$
where we recall that $\M = \M(w)$ was defined in \eqref{e:def:Mq}.
\label{t:stat1}
\end{thm}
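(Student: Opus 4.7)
The plan is to combine Theorem~\ref{t:nash} with a Nash-type ODE argument.  By reversibility and the semigroup property, $p_t(0,0) = \sum_x p_{t/2}(0,x)^2 = \|u_{t/2}\|_2^2 =: \phi(t/2)$, so it suffices to show $\phi(t) \lesssim \M^{2\alpha/\beta}\,t^{-d/2}$ for $t \ge 1$.  Standard computations give $\phi'(t) = -2\,\mcl{D}_t$ and $\|u_t\|_1 = 1$, while the $w$-moderate assumption reads $\|w\,\na u_t\|_2^2 \le \mcl{D}_t$.  Applying Theorem~\ref{t:nash} to $u_t$ then yields
\begin{equation*}
\phi(t) \;\le\; C\,\M^{2\alpha}\,\mcl{D}_t^{\alpha}\,\psi(t)^{\gamma} \;=\; C\,\M^{2\alpha}\bigl(-\tfrac12\phi'(t)\bigr)^{\alpha}\psi(t)^{\gamma},
\end{equation*}
where $\psi(t) := \||x|_*^{p/2}u_t\|_2^2$.

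The heart of the proof is a bound of the form $\psi(t) \lesssim t^{p/2}\,\phi(t)$.  I would derive it by splitting the defining sum at scale $K\sqrt{t}$,
\begin{equation*}
\psi(t) \;\le\; K^{p}\,t^{p/2}\,\phi(t) \;+\; \|u_t\|_\infty\,\E\!\bigl[\,|X_t|_*^{p}\,\1_{|X_t|>K\sqrt{t}}\,\bigr],
\end{equation*}
combined with the Cauchy--Schwarz bound $u_t(x) = \sum_y p_{t/2}(0,y)\,p_{t/2}(y,x) \le \sqrt{\phi(t/2)\,p_t(x,x)} \le \sqrt{\phi(t/2)}$ and with tail/moment estimates on $|X_t|$ derived from $a\le 1$ (the number of jumps by time $t$ being stochastically dominated by a Poisson variable of parameter $2dt$).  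A bootstrap on the inductively improved bound for $\phi$, together with an appropriate choice of $K$ (possibly a slowly growing function of $t$), renders the tail contribution negligible and yields the stated estimate.

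Substituting $\psi(t) \lesssim t^{p/2}\phi(t)$ into the Nash inequality gives $\phi^{1-\gamma} \lesssim \M^{2\alpha}\mcl{D}_t^{\alpha}\,t^{p\gamma/2}$, equivalently
\begin{equation*}
\phi(t)^{(1-\gamma)/\alpha} \;\lesssim\; \M^{2}\bigl(-\phi'(t)\bigr)\,t^{p\gamma/(2\alpha)}.
\end{equation*}
Setting $m := (1-\gamma)/\alpha - 1 = \beta/\alpha$, this rewrites as $(\phi^{-m})' \gtrsim \M^{-2}\,t^{-p\gamma/(2\alpha)}$; integrating over $[1,t]$ and using the identity $2\alpha - p\gamma = d\beta$ (which follows from \eqref{e:def:alpha}) yields $\phi(t)^{-m} \gtrsim \M^{-2}\,t^{\,d\beta/(2\alpha)}$, and therefore $\phi(t) \lesssim \M^{2/m}\,t^{-d\beta/(2\alpha m)} = \M^{2\alpha/\beta}\,t^{-d/2}$.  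The main obstacle is truly the sharpening $\psi(t)\lesssim t^{p/2}\phi(t)$: using the trivial $\|u_t\|_\infty\le 1$ gives only $\psi(t)\le C\,t^{p/2}$, which produces the strictly weaker estimate $\phi(t) \le C\,\M^{2\alpha/(\beta+\gamma)}\,t^{-(d\beta-p\gamma)/(2(\beta+\gamma))}$.  Gaining the extra factor $\phi(t)$ in $\psi$ is what encodes the diffusive concentration of $u_t$ near the origin and is what allows one to recover the sharp exponents appearing in the claim.
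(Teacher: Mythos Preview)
Your setup and your final ODE calculation are both correct: once you have $\psi(t)\lesssim t^{p/2}\phi(t)$, the identity $2\alpha-p\gamma=d\beta$ together with the integration you carry out does indeed deliver $\phi(t)\lesssim\M^{2\alpha/\beta}t^{-d/2}$. The gap is precisely where you locate it, at the estimate $\psi(t)\lesssim t^{p/2}\phi(t)$, and the argument you sketch does not close. Even granting a Carne--Varopoulos type bound $\E[|X_t|_*^p\,\1_{|X_t|>K\sqrt t}]\lesssim t^{p/2}e^{-cK^2}$ (which itself goes beyond the naive Poisson domination you invoke), your decomposition gives at best
\[
\psi(t)\;\le\;K^p\,t^{p/2}\phi(t)\;+\;C\,\sqrt{\phi(t/2)}\,t^{p/2}e^{-cK^2}.
\]
For the second term to be absorbed into the first you need $\sqrt{\phi(t/2)}\,e^{-cK^2}\lesssim K^p\phi(t)$. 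Since you have no a priori lower bound on $\phi$ and no doubling property $\phi(t/2)\lesssim\phi(t)^2$, the only way to force this is to let $K$ grow with $t$, which costs at least a factor $(\log t)^{p/2}$ in front of $t^{p/2}\phi(t)$; feeding that back into the ODE spoils the sharp exponent. The vague ``bootstrap on the inductively improved bound for $\phi$'' does not explain how to avoid this loss, and as written the argument does not reach the claimed conclusion.

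The paper bypasses this difficulty altogether by controlling $\psi$ via a differential inequality obtained \emph{from the equation} rather than from kernel tail bounds. A direct computation (Proposition~\ref{p:wrap}) using $\partial_t u=-\nabla^*a\nabla u$ and the crude fact $a\le 1$ yields
\[
\psi'(t)\;\lesssim\;\psi(t)^{(p-2)/p}\,\phi(t)^{2/p}.
\]
Introducing $\Lambda_t=1\vee\sup_{s\le t}s^{d/2}\phi(s)$ and integrating gives $\psi(t)\lesssim\Lambda_t\,t^{(p-d)/2}$ for $t\ge 1/2$, which is weaker than your target $\psi\lesssim t^{p/2}\phi$ but has the virtue of being proved. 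Plugging this into the Nash inequality and integrating produces, using the same arithmetic you carried out,
\[
t^{d/2}\phi(t)\;\lesssim\;\M^{2\alpha/(1-\alpha)}\,\Lambda_t^{\gamma/(1-\alpha)},
\]
hence $\Lambda_t\lesssim\M^{2\alpha/(1-\alpha)}\Lambda_t^{\gamma/(1-\alpha)}$, and since $\gamma/(1-\alpha)<1$ this closes to $\Lambda_t\lesssim\M^{2\alpha/\beta}$. In short, the bootstrap is executed through the monotone quantity $\Lambda_t$, and the missing ingredient in your proposal is the differential inequality for $\psi$ coming from the PDE.
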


Before proving the theorem, we introduce some notation, and then prove an important preliminary result.
For $f : \Z^d \to \R$ and $x \in \Z^d$, we define $\na_i f(x)=f(x+\e_i) - f(x)$. We observe that the following discrete Leibniz rule holds, for $f,g : \Z^d \to \R$:
$$
\na_i(fg) = (\na_i f) g + f(\cdot + \e_i) \na_i g.
$$

We write $a_i(x) = a(x,x+\e_i)$.

\begin{prop}
\label{p:wrap}
There exists $C < \infty$ such that 
\begin{equation}
\label{e:wrap}
\frac{d}{dt} \, \||x|_*^{p/2} u_t\|_2^2 \le C \, \| |x|_*^{p/2} u_t\|_2^{2(p-2)/p} \, \|u_t\|_2^{4/p}.
\end{equation}
\end{prop}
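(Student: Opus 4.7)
The plan is to differentiate $M(t) := \| |x|_*^{p/2} u_t \|_2^2$ in time, apply the discrete parabolic equation together with summation by parts, and then control the resulting edge sum by the right-hand side of \eqref{e:wrap} via an algebraic identity and Hölder's inequality. Writing $\psi(x) := |x|_*^{p/2}$ for brevity, and using $\partial_t u_t = L u_t$ where $(Lu)(y) = \sum_{z \sim y} a(\{y,z\})(u(z)-u(y))$ together with the standard summation by parts $\sum_x g \, Lu = -\sum_e a(e)\, \na g(e)\, \na u(e)$ applied to $g = \psi^2 u_t$, one obtains
\begin{equation*}
\frac{d}{dt} M(t) = -2 \sum_{e \in \B} a(e)\, \na(\psi^2 u_t)(e)\, \na u_t(e).
\end{equation*}

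The central step is an edgewise algebraic identity: for $e = \{x,y\}$, direct expansion yields
\begin{equation*}
\na(\psi^2 u)(e)\, \na u(e) = \bigl(\na(\psi u)(e)\bigr)^2 - u(x)\, u(y) \bigl(\na \psi(e)\bigr)^2.
\end{equation*}
Since $a \ge 0$, the quadratic term in $\na(\psi u)$ contributes non-negatively to $-\frac{d}{dt}M(t)$ and can be dropped, while $a(e) \le 1$ leaves
\begin{equation*}
\frac{d}{dt} M(t) \le 2 \sum_{e \in \B} u_t(x)\, u_t(y)\, \bigl(\na\psi(e)\bigr)^2.
\end{equation*}
Here positivity of the heat kernel ($u_t \ge 0$) is crucial: it forces the error term to have the sign needed for it to survive as an upper bound on $\frac{d}{dt} M(t)$.

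Next, the mean value theorem applied to $t \mapsto t^{p/2}$, combined with the observation that $|y|_* \le 2|x|_*$ for any neighbour $y$ of $x$ (and that $\na \psi$ vanishes on edges incident to $0$), gives the edge bound $(\na\psi(e))^2 \le C\,|x|_*^{p-2}$. Symmetrizing over edge endpoints and using AM-GM on the product $u_t(x) u_t(y)$ then reduces the estimate to
\begin{equation*}
\frac{d}{dt} M(t) \le C \sum_{x \in \Z^d} u_t(x)^2 \, |x|_*^{p-2}.
\end{equation*}
Finally, writing $u_t(x)^2 |x|_*^{p-2} = u_t(x)^{4/p}\, \bigl(|x|_*^{p/2} u_t(x)\bigr)^{2(p-2)/p}$ and applying Hölder's inequality with conjugate exponents $p/2$ and $p/(p-2)$ produces $\|u_t\|_2^{4/p}\, M(t)^{(p-2)/p}$, which is exactly \eqref{e:wrap}.

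The main obstacle is finding the symmetric identity in the second step. A naive Leibniz-type expansion $\na(\psi^2 u)(e) = \psi(x)^2\, \na u(e) + u(y)\,\na(\psi^2)(e)$ would leave an error term bilinear in $\na u$ and $\na \psi$, which under Cauchy--Schwarz can only be absorbed at the cost of reintroducing the Dirichlet form $\mcl{D}_t$—a quantity that does not appear on the right-hand side of \eqref{e:wrap} and cannot be controlled by its ingredients alone. The symmetric identity above instead converts the dangerous $\na u$ factor into a product $u(x) u(y)$, whose sign is dictated by positivity of the heat kernel and which is directly amenable to the Hölder step.
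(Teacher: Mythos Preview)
Your proof is correct and reaches the same endpoint as the paper---the bound $\frac{d}{dt}M(t) \lesssim \sum_x u_t(x)^2 |x|_*^{p-2}$ followed by H\"older---but the route to that bound is genuinely different.

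The paper does exactly what you dismiss in your final paragraph: it applies the ``naive'' Leibniz rule to $\nabla_i(\rho^p u_t)$ with $\rho = |x|_*$, obtaining a good negative term $-\rho^p(\cdot+\e_i)\,a_i\,|\nabla_i u_t|^2$ and a cross term $C\rho^{p-1} u_t\, a_i\,\nabla_i u_t$. It then uses $a \le 1 \Rightarrow a \ge a^2$ together with Cauchy--Schwarz and Young's inequality to absorb the cross term into the good negative term, leaving $\sum \rho^{p-2} u_t^2$. So your claim that this approach ``can only be absorbed at the cost of reintroducing the Dirichlet form $\mcl{D}_t$'' is not accurate: the weighted Dirichlet term that appears is precisely the good term already present, and no unweighted $\mcl{D}_t$ survives.

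Your alternative via the edgewise identity $\nabla(\psi^2 u)\,\nabla u = (\nabla(\psi u))^2 - u(x)u(y)(\nabla\psi)^2$ is clean and avoids the Cauchy--Schwarz/Young absorption entirely. The trade-off is that your argument relies on $u_t \ge 0$ (to give the error term the right sign), whereas the paper's absorption argument never uses positivity and would apply verbatim to any bounded solution of the parabolic equation. In the present context $u_t$ is a heat kernel, so positivity is free and your approach is a perfectly valid---and arguably more elegant---substitute.
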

\begin{proof}
The proof is similar to that of \cite[(81)]{gno}, with some extra care required by the fact that we do not assume uniform ellipticity here. We write $\rho(x) = |x|_*$, and observe that
\begin{eqnarray*}
\frac 1 2 \frac{d}{dt} \, \||x|_*^{p/2} u_t\|_2^2 & = & \frac 1 2 \frac{d}{dt} \sum_{x \in \Z^d} \rho^p u_t^2(x) \\
& = & -\sum_{e \in \B} \na(\rho^p u_t) a \na u_t (e) \\
& = & -\sum_{\substack{x \in \Z^d \\ 1 \le i \le d}}  \na_i(\rho^p u_t) a_i \na_i u_t (x).
\end{eqnarray*}
By the discrete Leibniz rule, we have 
$$
\na_i(\rho^p u_t) = \na_i(\rho^p) u_t + \rho^p(\cdot + \e_i) \na_i u_t,
$$
and moreover, $\na_i(\rho^p)(x) \lesssim \rho^{p-1}(x)$.
As a consequence, the \lhs\ of \eqref{e:wrap} is bounded by 
$$
\sum_{\substack{x \in \Z^d \\ 1 \le i \le d}} \Ll[-\rho^p(\cdot + \e_i)a_i |\na_iu_t|^2 + C \rho^{p-1} u_t a_i \na_i u_t \Rr] (x)
$$
for some constant $C$. Since $a \le 1$, we have $a \ge a^2$, so that by the Cauchy-Schwarz inequality,
$$
\sum_{\substack{x \in \Z^d \\ 1 \le i \le d}} \rho^{p-1} u_t a_i \na_i u_t (x) \le \Bigg(\sum_{\substack{x \in \Z^d \\ 1 \le i \le d}} \rho^p a_i |\na_i u_t|^2 (x)\Bigg)^{1/2} \Bigg(\sum_{\substack{x \in \Z^d \\ 1 \le i \le d}} \rho^{p-2} |u_t|^2 (x)\Bigg)^{1/2}.
$$
Since $\rho(x) \lesssim \rho(x+\e_i)$, the Young inequality implies that the \lhs\ of \eqref{e:wrap} is bounded by a constant times
$$
\sum_{x \in \Z^d} \rho^{p-2} |u_t|^2 (x) \stackrel{\text{(H\"older)}}{\le} \Ll(\sum_{x \in \Z^d} \rho^{p} |u_t|^2 (x) \Rr)^{\frac{p-2}{p}} \Ll( \sum_{x \in \Z^d} |u_t|^2 (x) \Rr)^{\frac2p},
$$
and this completes the proof.
\end{proof}
\begin{proof}[Proof of Theorem~\ref{t:stat1}]
Let $\mcl{E}_t = \|u_t\|_2^2$ and $\mcl{N}_t = \||x|_*^{p/2} u_t\|_2^2$.
We write ${\mcl{E}}'_t$ for the time derivative of $\mcl{E}$ at time $t$, and similarly for other quantities. Note that
\begin{equation}
\label{e:rel1}
{\mcl{E}}'_t = -2 \mcl{D}_t,
\end{equation}
while by Proposition~\ref{p:wrap}, 
\begin{equation}
\label{e:rel2}
\mcl{N}'_t \lesssim\mcl{N}_t^{(p-2)/p} \, \mcl{E}_t^{2/p}.
\end{equation}
Let us define 
$$
\Lambda_t = 1 \vee \sup_{s \le t} s^{d/2} \, \mcl{E}_s.
$$
Integrating \eqref{e:rel2} (and since $\mcl{N}_0 = 1$), we obtain, for every $t \ge 1/2$,
\begin{equation}
\label{e:rel2bis}
\mcl{N}_t \lesssim \Lambda_t \, t^{(p-d)/2}. 
\end{equation}
For $\alpha,\beta,\gamma \in (0,1)$ as in Theorem~\ref{t:nash}, we have
$$
\mcl{E}_t \lesssim \Ll(\M^2\, \|w \na u_t\|_2^2\Rr)^{\alpha} \ \mcl{N}_t^\gamma,
$$
where we used the fact that $\|u_t\|_1 = 1$. In view of \eqref{e:cond-stat1} and \eqref{e:rel1}, it follows that
$$
\mcl{E}_t \lesssim (-\M^2\, {\mcl{E}}'_t)^{\alpha} \ \mcl{N}_t^\gamma,
$$
and by \eqref{e:rel2bis}, for every $t \ge 1/2$,
$$
 (-\M^2\, \mcl{E}'_t)^{\alpha} \gtrsim \mcl{E}_t \ \Lambda_t^{-\gamma} \, t^{-\gamma(p-d)/2}.
$$
Integrating this relation (and recalling that $\Lambda_t$ is increasing), we get
\begin{equation}
\label{e:stt}
\mcl{E}_t^{1-\frac{1}{\alpha}} \gtrsim \M^{-2} \, \Lambda_t^{-\frac{\gamma}{\alpha}} \, t^{1-\frac{\gamma(p-d)}{2\alpha}},
\end{equation}
provided that $1-\frac{\gamma(p-d)}{2\alpha} > 0$. In order to check this and to simplify this expression, we recall from \eqref{e:bary} that
$$
\frac{d}{2} \, \beta + \frac{p}{2} \, \gamma = 1-\beta - \gamma = \alpha,
$$
so that
\begin{equation}
\label{e:barbar}
\alpha-\frac{\gamma(p-d)}{2} = \frac{d}{2} \, \beta + \frac{d}{2} \, \gamma = \frac{d}{2}(1-\alpha).
\end{equation}
Since $\alpha < 1$, the inequality \eqref{e:stt} is justified, and can be rewritten as
$$
\mcl{E}_t \lesssim \M^{\frac{2\alpha}{1-\alpha}} \, \Lambda_t^{\frac{\gamma}{1-\alpha}} \, t^{-\frac{d}{2}}.
$$
Since $\Lambda_t$ is increasing, we get that for every $s \le t$, 
$$
s^{d/2} \mcl{E}_s \lesssim \M^{\frac{2\alpha}{1-\alpha}} \, \Lambda_t^{\frac{\gamma}{1-\alpha}},
$$
that is,
$$
\Lambda_t \lesssim \M^{\frac{2\alpha}{1-\alpha}} \, \Lambda_t^{\frac{\gamma}{1-\alpha}}.
$$
Since $\frac{\gamma}{1-\alpha} = \frac{\gamma}{\beta + \gamma} < 1$, this shows that
$$
\Lambda_t \lesssim \M^{2\alpha/\beta},
$$
and in particular,
\begin{equation}
\label{e:integrated}
\mcl{E}_t \lesssim \frac{\M^{2\alpha/\beta}}{t^{d/2}}.
\end{equation}
By the semi-group property,
$$
p_t(0,0) = \sum_{x \in \Z^d} p_{t/2}(0,x) p_{t/2}(x,0).
$$
The symmetry of $p_t(\cdot,\cdot)$ thus leads to
$
p_t(0,0) \le \mcl{E}_{t/2},
$
and in view of \eqref{e:integrated}, this concludes the proof.
\end{proof}
As a by-product of the above proof, we also obtain the following result.
\begin{prop}[Off-diagonal decay]
\label{p:off-diag}
For every $p > d$, $q > d$ and $\alpha,\beta,\gamma \in (0,1)$ as in Theorem~\ref{t:nash}, there exists $C < \infty$ such that if the static environment $a$ is $w$-moderate, then for every $t \ge 1$,
$$
\||x|_*^{p/2} \, u_t\|_2^2 \le C \,\M^{2\alpha/\beta} \, t^{(p-d)/2}. 
$$
\end{prop}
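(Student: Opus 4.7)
The plan is to extract this estimate directly from the proof of Theorem~\ref{t:stat1}, since all the relevant inequalities on $\mcl{N}_t = \||x|_*^{p/2}u_t\|_2^2$ have already been established along the way. The point is that nothing in the argument for Theorem~\ref{t:stat1} was truly discarded: the bound on $\mcl{N}_t$ used there as an intermediate step already has the shape claimed in the proposition, once the final control on the ``envelope'' $\Lambda_t$ is fed back into it.

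Concretely, I would reintroduce $\mcl{E}_t = \|u_t\|_2^2$, $\mcl{N}_t = \||x|_*^{p/2}u_t\|_2^2$ and $\Lambda_t = 1 \vee \sup_{s\le t} s^{d/2}\mcl{E}_s$, and recall from the preceding proof the two key intermediate facts: first, integrating the differential inequality of Proposition~\ref{p:wrap} between $0$ and $t$ (together with $\mcl{N}_0=1$) yields, for every $t \ge 1/2$,
\begin{equation*}
\mcl{N}_t \lesssim \Lambda_t \, t^{(p-d)/2},
\end{equation*}
which is precisely the bound~\eqref{e:rel2bis}. Second, the argument based on the anchored Nash inequality of Theorem~\ref{t:nash} combined with the $w$-moderate assumption~\eqref{e:cond-stat1} and the energy identity $\mcl{E}'_t = -2\mcl{D}_t$ was shown to close on itself and produce
\begin{equation*}
\Lambda_t \lesssim \mcl{M}_q^{2\alpha/\beta}.
\end{equation*}

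The conclusion is then immediate: plugging the second estimate into the first gives
\begin{equation*}
\mcl{N}_t \lesssim \mcl{M}_q^{2\alpha/\beta} \, t^{(p-d)/2},
\end{equation*}
which is the desired inequality for $t \ge 1/2$ (and hence in particular for $t \ge 1$). Since all computations are already contained in the previous proof, there is no real obstacle to surmount; the only thing to be careful about is to quote the intermediate bound on $\mcl{N}_t$ before it is discarded in favor of the bound on $\mcl{E}_t$, and to note that the exponent $(p-d)/2$ appearing here is exactly the one produced by the relation~\eqref{e:barbar} that was already used to justify~\eqref{e:stt}.
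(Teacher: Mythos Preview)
Your proposal is correct and is essentially identical to the paper's own proof, which simply says ``This follows directly from \eqref{e:rel2bis} and \eqref{e:integrated}'': you combine the intermediate bound $\mcl{N}_t \lesssim \Lambda_t\, t^{(p-d)/2}$ with the final control $\Lambda_t \lesssim \mcl{M}_q^{2\alpha/\beta}$ obtained in the proof of Theorem~\ref{t:stat1}. The closing remark about~\eqref{e:barbar} is a small red herring---the exponent $(p-d)/2$ in~\eqref{e:rel2bis} comes straight from integrating~\eqref{e:rel2}, not from~\eqref{e:barbar}---but this does not affect the argument.
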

\begin{proof}
This follows directly from \eqref{e:rel2bis} and \eqref{e:integrated}.
\end{proof}

\begin{prop}[Moment condition]
\label{p:moments}
Let $q > d$ and $\alpha, \beta, \gamma$ be as in Theorem~\ref{t:nash}. Assume that under the probability measure $\P$, the family of random variables $a = (a(e))_{e \in \B}$ is stationary with respect to translations, takes values in $(0,1]$, and satisfies
$$
\E\Ll[a(e)^{-q/2}\Rr] < \infty.
$$
There exists a random variable $\mcl X \ge 0$ such that
$$
\forall r < \frac{q\beta}{2\alpha},\ \E\Ll[\mcl X^r\Rr] < \infty
$$
and
\begin{equation}
\label{e:decay}
p_t(0,0) \le \frac{\mcl X}{t^{d/2}}.
\end{equation}
\end{prop}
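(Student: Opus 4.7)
The plan is to apply Theorem~\ref{t:stat1} with the specific choice $w = \sqrt{a}$, and then control the resulting random prefactor via a maximal ergodic inequality.

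First, with $w = \sqrt{a}$, the $w$-moderate condition \eqref{e:cond-stat1} is automatic (it holds with equality, by the very definition of $\mcl{D}_t$). Theorem~\ref{t:stat1} therefore yields a deterministic constant $C < \infty$ such that, almost surely and for every $t \ge 1$,
$$
p_t(0,0) \le C \, \frac{\mcl{M}_q(\sqrt{a})^{2\alpha/\beta}}{t^{d/2}}, \quad \text{where} \quad \mcl{M}_q(\sqrt{a})^q = \sup_{r \in \N} \frac{1}{|\B_r|}\sum_{e \in \B_r} a(e)^{-q/2}.
$$
For $t \in (0,1]$ the trivial bound $p_t(0,0) \le 1 \le t^{-d/2}$ can be absorbed by enlarging the constant, so setting $\mcl{X} := 1 \vee C \, \mcl{M}_q(\sqrt{a})^{2\alpha/\beta}$ establishes \eqref{e:decay}. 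The whole matter thus reduces to bounding the moments of $\mcl{M}_q(\sqrt{a})$.

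Let $Y(e) = a(e)^{-q/2}$. By assumption, $\E[Y(e)] < \infty$, and by hypothesis the family $(Y(e))_{e \in \B}$ is stationary under $\P$. Since $\mcl{M}_q(\sqrt{a})^q$ is the supremum over $r \in \N$ of the averages of $Y$ on the centred boxes $\B_r$, it is dominated by the discrete Hardy-Littlewood maximal function of $Y$ at the origin. Applying the Wiener maximal ergodic inequality to the stationary $\Z^d$-action on $(Y(e))_e$ yields a weak-type bound
$$
\P\bigl( \mcl{M}_q(\sqrt{a})^q > \lambda \bigr) \le \frac{C \, \E\bigl[a(e)^{-q/2}\bigr]}{\lambda}, \qquad \lambda > 0.
$$
Integrating this tail bound against $s \lambda^{s-1}\,\d\lambda$ shows that $\E\bigl[\mcl{M}_q(\sqrt{a})^{qs}\bigr] < \infty$ for every $s \in (0,1)$.

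Finally, writing
$$
\E[\mcl{X}^r] \lesssim 1 + C^r \, \E\bigl[\mcl{M}_q(\sqrt{a})^{2\alpha r/\beta}\bigr] = 1 + C^r \, \E\bigl[\mcl{M}_q(\sqrt{a})^{q \cdot 2\alpha r/(q\beta)}\bigr],
$$
the right-hand side is finite precisely when $2\alpha r/(q\beta) < 1$, i.e.\ when $r < q\beta/(2\alpha)$, as required. The only non-routine ingredient is the maximal inequality for stationary random fields on $\Z^d$; this is the reason why we can only obtain \emph{all} moments $r < q\beta/(2\alpha)$ and not the endpoint itself, since the hypothesis $\E[a(e)^{-q/2}] < \infty$ only places $Y$ in $L^1(\P)$, where the maximal function is a priori only in weak $L^1$.
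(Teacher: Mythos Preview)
Your argument is correct and follows exactly the same route as the paper: choose $w=\sqrt{a}$ so that $a$ is trivially $w$-moderate, apply Theorem~\ref{t:stat1}, and control the moments of $\mcl{M}_q(\sqrt{a})$ via the weak-type $(1,1)$ maximal inequality (the paper's Proposition~\ref{p:weak}). Your write-up is in fact more detailed than the paper's, which compresses the argument to three lines; the extra care you take with the range $t\in(0,1]$ and with spelling out why the weak-$L^1$ bound yields all moments below the endpoint is entirely appropriate.
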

\begin{proof}
Clearly, the environment $a$ is $\sqrt{a}$-moderate. Hence, it suffices to check that for every $r < 1$, we have $\E[\M^{qr}(\sqrt{a})] < \infty$. This follows from Proposition~\ref{p:weak}.
\end{proof}
\begin{rem}
\label{r:perco}
The percolation case corresponds to assuming that $(a(e))_{e \in \B}$ are i.i.d.\ Bernoulli random variables. Obtaining heat-kernel upper bounds for super-critical percolation would require more work, which we will not pursue here (see \cite{matrem, bar} for previous work on this). In \cite{lno}, the following simpler situation is considered: all edges~$e$ pointing in a given direction satisfy $a(e) = 1$, while $(a(e))$ are i.i.d.\ Bernoulli for the other edges. In this case, one can show that the environment is $w$-moderate for some $w$ such that all moments of $w(e)^{-1}$ are finite (see \cite[Lemma~4]{lno}). We thus obtain \eqref{e:decay} for a random variable $\mcl X$ with finite moments of every order.
\end{rem}
\section{Dynamic environments}
\label{s:dynamic}

We now turn our attention to \emph{dynamic} degenerate environments.

\begin{defi}
\label{def:dyn-mod}
For every $e \in \B$, we give ourselves a piecewise continuous function
$$
\Ll\{
\begin{array}{lll}
\R & \to & [0,1] \\
t & \mapsto & a_t(e),
\end{array}
\Rr.
$$
and we call $a = (a_t(e))_{e \in \B, t \in \R}$ a \emph{dynamic environment}. (It is convenient, although inessential, to consider environments defined over the whole time line.) For every $x \in \Z^d$ and $s \in \R$, we define $(p_{s,t}(x,y))_{y \in \Z^d, t \ge s}$ as the unique bounded function such that $p_{s,s}(x,y) = \1_{x = y}$ and, for every $t \ge s$,
$$
\partial_t p_{s,t}(x,y) = \sum_{z \sim y} a_t(\{y,z\}) \Ll(p_{s,t}(x,z) - p_{s,t}(x,y)\Rr).
$$
In probabilistic terms, $p_{s,t}(x,y)$ is the probability that the random walk evolving in the dynamic environment $a$ and started at time $s$ and position $x$ reaches $y$ at time~$t$.
We write $u_t = p_{0,t}(0,\cdot)$ and
$$
\mcl{E}_t = \|u_t\|_2^2, \qquad \mcl{D}_t = \|\sqrt{a_t} \, \na u_t\|_2^2, \qquad \ov{\mcl{D}}_t = \int_{t}^{+\infty} K_{s-t} \, \mcl{D}_s \, \d s.
$$
Let $K : \R_+ \to \R_+$ and, for each $e \in \B$, let $t \in \R_+ \mapsto w_t(e) \in [0,1]$ be a measurable function. We say that the dynamic environment $a$ is $(w,K)$\emph{-moderate} if for every $t \ge 0$,
\begin{equation}
\label{e:hyp:dynamic}
\|w_t \, \na u_t\|_2^2 \le \ov{\mcl{D}}_t.
\end{equation}
\end{defi}
\begin{thm}[Energy upper bound for dynamic environments]
\label{t:dynamic}
Let $q > d$ and $\alpha, \beta, \gamma \in (0,1)$ be as in Theorem~\ref{t:nash}. There exists $C < \infty$ such that if the dynamic environment $a$ is $(w,K)$-moderate, then 
\begin{equation}
\label{e:step1}
\mcl{E}_t \le C \  \mcl{C}(K) \ \frac{\MM^{2\alpha/\beta}(w)}{t^{d/2}},
\end{equation}
where 
\begin{equation}
\label{e:def:max-space-time}
\MM^{-2}(w) = \inf_{t \ge 1} \frac{1}{t} \int_0^t \M^{-2}(w_s) \, \d s
\end{equation}
and
\begin{equation}
\label{e:defCK}
\mcl{C}(K) =1 \vee \frac{\|K\|_1^{\alpha/\beta}}{\|K\|^{(1-\alpha)/\beta}_{L^1([0,1])}}.
\end{equation}
\end{thm}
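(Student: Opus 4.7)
The proof parallels that of Theorem \ref{t:stat1}, with the time-averaged dissipation $\bar{\mcl{D}}_t$ playing the role of the pointwise dissipation. As a preliminary, Proposition \ref{p:wrap} extends verbatim to the dynamic setting: its proof only uses the pointwise bound $a_t \le 1$ and the Leibniz rule at each fixed time, so we still have $\mcl{N}_t \lesssim \Lambda_t \, t^{(p-d)/2}$, where $\Lambda_t := 1 \vee \sup_{s \le t} s^{d/2}\mcl{E}_s$. Applying Theorem \ref{t:nash} at each time $t$ with the weight $w_t$, using $\|u_t\|_1 = 1$, and invoking $(w,K)$-moderateness gives the pointwise inequality
\begin{equation*}
\mcl{E}_t \lesssim \bigl(\M^2(w_t)\,\bar{\mcl{D}}_t\bigr)^\alpha \, \mcl{N}_t^\gamma.
\end{equation*}

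The central new ingredient is to introduce the $K$-averaged energy $\bar{\mcl{E}}_t := \int_0^\infty K_s\, \mcl{E}_{t+s}\, ds$, which satisfies $\bar{\mcl{E}}_t' = -2\bar{\mcl{D}}_t$ and thus plays the role of $\mcl{E}_t$ in the static argument. Since $\mcl{E}$ is nonincreasing (because $\mcl{E}_t' = -2\mcl{D}_t \le 0$), we obtain the two-sided comparison
\begin{equation*}
\|K\|_{L^1([0,1])}\, \mcl{E}_{t+1} \le \bar{\mcl{E}}_t \le \|K\|_1 \, \mcl{E}_t.
\end{equation*}
Substituting the upper bound into the Nash inequality, using $\mcl{N}_t \lesssim \Lambda_T t^{(p-d)/2}$ for $t \le T$, and rearranging yields the autonomous ODI
\begin{equation*}
-\bar{\mcl{E}}_t'/\bar{\mcl{E}}_t^{1/\alpha} \gtrsim \|K\|_1^{-1/\alpha}\,\M^{-2}(w_t)\,\Lambda_T^{-\gamma/\alpha}\, t^{-\gamma(p-d)/(2\alpha)}.
\end{equation*}

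Integrating over $t \in [0,T]$ mimics the static proof: the left-hand side telescopes to a term proportional to $\bar{\mcl{E}}_T^{-(1-\alpha)/\alpha}$ (after dropping a positive initial-time contribution), while on the right-hand side one uses
\begin{equation*}
\int_0^T \M^{-2}(w_s)\, s^{-\epsilon}\, ds \;\ge\; T^{-\epsilon}\int_0^T \M^{-2}(w_s)\, ds \;\ge\; T^{1-\epsilon}\,\MM^{-2}
\end{equation*}
for $T \ge 1$, where $\epsilon := \gamma(p-d)/(2\alpha)$; this is where the definition of $\MM$ enters. Now one invokes the lower comparison $\bar{\mcl{E}}_T \ge \|K\|_{L^1([0,1])}\,\mcl{E}_{T+1}$, raises to the power $-\alpha/(1-\alpha)$, and uses the identity $(1-\epsilon)\cdot \alpha/(1-\alpha) = d/2$ (a rewriting of the constraint $d\beta/2 + p\gamma/2 = \alpha$ already used in the static proof) to conclude
\begin{equation*}
\mcl{E}_{T+1} \lesssim A\,\Lambda_T^{\gamma/(1-\alpha)}\,T^{-d/2}
\end{equation*}
for a constant $A$ built out of $\|K\|_1$, $\|K\|_{L^1([0,1])}$ and $\MM^{2\alpha/(1-\alpha)}$.

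Multiplying by $(T+1)^{d/2}$ and passing to the supremum over $s \le T+1$ produces the self-referential bound $\Lambda_{T+1} \lesssim A\,\Lambda_{T+1}^{\gamma/(1-\alpha)}$; since $\gamma/(1-\alpha) = \gamma/(\beta+\gamma) < 1$, the same manipulation as in the final step of Theorem \ref{t:stat1} yields $\Lambda_T \lesssim A^{(1-\alpha)/\beta}$, and checking the exponents shows $A^{(1-\alpha)/\beta}$ matches $\mcl{C}(K)\,\MM^{2\alpha/\beta}$. The main obstacle is threading the two different $K$-norms through the argument with the correct exponents: $\|K\|_1$ enters each time one converts $\mcl{E}$ into $\bar{\mcl{E}}$ pointwise (inside the ODI), whereas $\|K\|_{L^1([0,1])}$ enters exactly once at the very end through the lower comparison needed to pass from $\bar{\mcl{E}}_T$ back to the energy $\mcl{E}_{T+1}$ we actually want to bound. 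A minor technical point is that the estimate $\int_0^T \M^{-2}(w_s)\,ds \ge T\MM^{-2}$ and the dropped initial-time contribution are clean only for $T \ge 1$, so the short-time case is handled separately via the trivial bound $\mcl{E}_t \le 1$.
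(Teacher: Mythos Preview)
Your proposal is correct and follows essentially the same route as the paper's proof: you introduce the $K$-averaged energy $\bar{\mcl{E}}_t$ with $\bar{\mcl{E}}_t' = -2\bar{\mcl{D}}_t$, sandwich it between $\|K\|_{L^1([0,1])}\,\mcl{E}_{t+1}$ and $\|K\|_1\,\mcl{E}_t$, derive and integrate the same ODI, and close with the self-referential bound on $\Lambda$. The only cosmetic difference is that the paper uses the lower comparison $\bar{\mcl{E}}_t \ge \|K\|_{L^1([0,1])}\,\mcl{E}_{2t}$ (for $t\ge 1$) rather than your $\bar{\mcl{E}}_t \ge \|K\|_{L^1([0,1])}\,\mcl{E}_{t+1}$, which is immaterial.
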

\begin{proof}
We define
$$
\ov{\mcl{E}}_t = \int_{t}^{+\infty} K_{s-t} \, \mcl{E}_s \, \d s, \qquad \mcl{N}_t = \||x|_*^{p/2} u_t\|_2^2.
$$
Proposition~\ref{p:wrap} remains valid for dynamic random environments, and gives us
\begin{equation}
\label{e:diffN}
\mcl{N}'_t \lesssim\mcl{N}_t^{(p-2)/p} \, \mcl{E}_t^{2/p}.
\end{equation}
Defining
$
\Lambda_t = 1 \vee \sup_{s \le t} s^{d/2} \, \mcl{E}_s,
$
we obtain as for the static case that for every $t \ge 1$,
\begin{equation}
\label{e:rel2ter}
\mcl{N}_t \lesssim \Lambda_t \, t^{(p-d)/2}. 
\end{equation}
By Theorem~\ref{t:nash}, 
$$
\mcl{E}_t \lesssim \Ll(\M^2(w_t)\, \|w_t \na u_t\|_2^2\Rr)^{\alpha} \ \mcl{N}_t^\gamma,
$$
which by assumption~\eqref{e:hyp:dynamic} and \eqref{e:rel2ter} leads to
$$
\mcl{E}_t \lesssim \Ll(\M^2(w_t)\,\ov{\mcl{D}}_t\Rr)^{\alpha} \ \Lambda^\gamma_t \, t^{\gamma(p-d)/2}.
$$
Since $\mcl{E}_t$ is decreasing, we have $\ov{\mcl{E}}_t \lesssim \|K\|_1\,  \mcl{E}_t$, and moreover, $\ov{\mcl{E}}'_t = -2\ov{\mcl{D}}_t$. As a consequence,
$$
\ov{\mcl{E}}_t \lesssim \|K\|_1\,\Ll(-\M^2(w_t)\,\ov{\mcl{E}}'_t\Rr)^{\alpha} \ \Lambda^\gamma_t \, t^{\gamma(p-d)/2},
$$
which can be rewritten as
\begin{equation}
\label{e:diffeq}
-\ov{\mcl{E}}'_t \ \ov{\mcl{E}}_t^{-\frac1\alpha} \gtrsim \|K\|_1^{-\frac1\alpha} \, \M^{-2}(w_t)\ \Lambda^{-\frac{\gamma}{\alpha}}_t \, t^{-\frac{\gamma(p-d)}{2\alpha}}.
\end{equation}
Since $\Gamma_t$ is increasing and $p > d$, we obtain
$$
\ov{\mcl{E}}_t^{1-\frac1\alpha} \gtrsim \|K\|_1^{-\frac1\alpha} \, \Lambda^{-\frac{\gamma}{\alpha}}_t \, t^{-\frac{\gamma(p-d)}{2\alpha}} \int_0^t \M^{-2}(w_s) \, \d s.
$$
Recalling \eqref{e:barbar} and the definition of $\MM = \MM(w)$ in \eqref{e:def:max-space-time}, we obtain that for every $t \ge 1$,
$$
t^{d/2} \, \ov{\mcl{E}}_t \lesssim \|K\|_1^{\frac{\alpha}{1-\alpha}} \MM^{\frac{2\alpha}{1-\alpha}} \, \Lambda_t^{\frac{\gamma}{1-\alpha}}.
$$
We now observe that, since $\mcl{E}_t$ is decreasing,
$$
\ov{\mcl{E}}_t \ge \mcl{E}_{2t} \int_0^{t} K_s \, \d s,
$$
and in particular, for $t \ge 1$, $\ov{\mcl{E}}_t \ge \|K\|_{L^1([0,1])}\,  \mcl{E}_{2t}$. Since moreover, $\Lambda_2 \le 2^{d/2}$, we get
$$
\Lambda_t \lesssim 1+\frac{\|K\|_1^{\frac{\alpha}{1-\alpha}}}{\|K\|_{L^1([0,1])}} \MM^{\frac{2\alpha}{1-\alpha}} \, \Lambda_t^{\frac{\gamma}{1-\alpha}},
$$
and this proves the theorem since $\mcl{C}(K) \ge 1$ and $\MM \ge 1$ (recall that $w \le 1$).
\end{proof}

As before, the proof above reveals more detailed information about off-diagonal decay of the heat kernel.
\begin{prop}[Off-diagonal decay]
\label{p:dyn-off-diag}
Let $p > d$, $q > d$ and $\alpha, \beta, \gamma \in (0,1)$ be as in Theorem~\ref{t:nash}. There exists $C < \infty$ such that if the dynamic environment $a$ is $(w,K)$-moderate, then for every $t \ge 1$,
$$
\||x|_*^{p/2} u_t\|_2^2 \le C \  \mcl{C}(K) \ \MM^{2\alpha/\beta}(w) \ t^{(p-d)/2}.
$$
\end{prop}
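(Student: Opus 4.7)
The plan is to combine two pieces already present in the proof of Theorem~\ref{t:dynamic}, in direct analogy with how Proposition~\ref{p:off-diag} was read off from the proof of Theorem~\ref{t:stat1} in the static case. No new estimate is needed; the off-diagonal bound is essentially a by-product of the computation already performed in Theorem~\ref{t:dynamic}.

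First, I would reuse the inequality \eqref{e:rel2ter} recorded inside that proof, namely, for every $t \ge 1$,
$$
\mcl{N}_t \lesssim \Lambda_t \, t^{(p-d)/2},
$$
where $\mcl{N}_t = \||x|_*^{p/2} u_t\|_2^2$ and $\Lambda_t = 1 \vee \sup_{s \le t} s^{d/2} \, \mcl{E}_s$. This estimate was obtained by integrating the differential inequality $\mcl{N}'_t \lesssim \mcl{N}_t^{(p-2)/p} \mcl{E}_t^{2/p}$ provided by Proposition~\ref{p:wrap} (which transfers unchanged to the dynamic setting, since the proof only uses the bound $a_t \le 1$ at each fixed time), together with the pointwise bound $\mcl{E}_s \le \Lambda_t \, s^{-d/2}$ for $s \le t$ and the initial value $\mcl{N}_0 = |0|_*^p = 1$.

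Second, I would extract from the concluding lines of the proof of Theorem~\ref{t:dynamic} the bound $\Lambda_t \lesssim \mcl{C}(K) \, \MM^{2\alpha/\beta}(w)$, which is exactly what \eqref{e:step1} says once one unfolds the definition of $\Lambda_t$ (the additive $1$ in its definition is absorbed using $\mcl{C}(K) \ge 1$ and $\MM \ge 1$). Substituting this into the previous display yields the claimed estimate. I do not anticipate a real obstacle here: the argument is a two-line composition of ingredients already assembled in Section~\ref{s:dynamic}, and the only point requiring care is to check that the resulting implicit constant depends only on $d,p,q,\alpha,\beta,\gamma$ and on $\mcl{C}(K)$, which is clear by inspection of \eqref{e:diffN}--\eqref{e:diffeq}.
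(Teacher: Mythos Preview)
Your proposal is correct and matches the paper's own proof, which is the single sentence ``This follows from \eqref{e:rel2ter} and \eqref{e:step1}.'' You have simply unpacked those two ingredients, and your remark that the additive $1$ in $\Lambda_t$ is absorbed via $\mcl{C}(K)\ge 1$ and $\MM\ge 1$ is the only small detail worth spelling out.
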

\begin{proof}
This follows from \eqref{e:rel2ter} and \eqref{e:step1}.
\end{proof}

For a dynamic environment $a$, we let $a\t$ be the dynamic environment defined, for all $s \in \R$, by $a\t_s = a_{t-s}$. 

\begin{cor}[Heat kernel upper bound for dynamic environments]
\label{c:dyn}
In the setting of Theorem~\ref{t:dynamic}, there exists $C < \infty$ such that if $a$ is $(w,K)$-moderate and $a\t$ is $(w\t,K\t)$-moderate, then
\begin{equation}
\label{e:c:dyn}
p_{0,t}(0,0) \le C \, \sqrt{\mcl{C}(K)\mcl{C}(K\t)} \, \frac{\Ll( \MM(w)\MM(w\t) \Rr)^{\alpha/\beta}}{t^{d/2}}
\end{equation}
\end{cor}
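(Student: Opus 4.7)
The plan is to reduce the corollary to Theorem~\ref{t:dynamic} via a Chapman--Kolmogorov decomposition combined with a time-reversal identity. First I would use the semigroup identity for the inhomogeneous chain,
\[
p_{0,t}(0,0) = \sum_{x \in \Z^d} p_{0,t/2}(0,x)\, p_{t/2,t}(x,0),
\]
followed by Cauchy--Schwarz, to obtain
\[
p_{0,t}(0,0) \le \Bigl(\sum_{x \in \Z^d} p_{0,t/2}(0,x)^2\Bigr)^{\!1/2} \Bigl(\sum_{x \in \Z^d} p_{t/2,t}(x,0)^2\Bigr)^{\!1/2}.
\]
The first factor is exactly $\mcl{E}_{t/2}^{1/2}$ for the environment $a$, which Theorem~\ref{t:dynamic} bounds by a multiple of $\mcl{C}(K)^{1/2}\, \MM(w)^{\alpha/\beta}\, t^{-d/4}$.

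For the second factor I would invoke time reversal. Because the generator at each instant is self-adjoint on $\ell^2(\Z^d)$ (the rates $a_s(\cdot)$ being defined on edges), the operator $P^a_{s,t}$ with kernel $p^a_{s,t}$ has $\ell^2$-adjoint equal to the propagator $P^{a\t}_{0,t-s}$ of the time-reversed environment $a\t$ introduced just above the corollary. At the level of matrix entries this reads
\[
p_{s,t}^a(x,y) \;=\; p_{0,t-s}^{a\t}(y,x),
\]
so with $s=t/2$ and $y=0$,
\[
\sum_{x \in \Z^d} p_{t/2,t}^a(x,0)^2 \;=\; \sum_{x \in \Z^d}\bigl(p_{0,t/2}^{a\t}(0,x)\bigr)^2,
\]
which is the analogue of $\mcl{E}_{t/2}$ computed for $a\t$. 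Since by assumption $a\t$ is $(w\t,K\t)$-moderate, Theorem~\ref{t:dynamic} applies a second time and bounds this quantity by a multiple of $\mcl{C}(K\t)\, \MM(w\t)^{2\alpha/\beta}\, t^{-d/2}$.

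Multiplying the two bounds and absorbing the numerical factor $2^{d/2}$ into $C$ yields \eqref{e:c:dyn} for $t \ge 2$; the range $t \in [1,2]$ is handled by the trivial bound $p_{0,t}(0,0) \le 1$ together with $\MM(w),\MM(w\t) \ge 1$ and $\mcl{C}(K),\mcl{C}(K\t) \ge 1$. The only ingredient going beyond a direct application of Theorem~\ref{t:dynamic} is the time-reversal identity for an inhomogeneous Markov chain with instantaneously symmetric rates. This is classical, but it is the one point that deserves verification and is, in my view, the main (if modest) obstacle.
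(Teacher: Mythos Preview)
Your proposal is correct and follows essentially the same approach as the paper: Chapman--Kolmogorov at time $t/2$, Cauchy--Schwarz, and the time-reversal identity $p_{s,t}(x,y)=p^{(t)}_{0,t-s}(y,x)$ (which the paper records as a separate lemma) to reduce both factors to applications of Theorem~\ref{t:dynamic}. The paper does not treat small $t$ separately, since Theorem~\ref{t:dynamic} as stated carries no lower restriction on $t$ and the bound is trivial for $t\le 1$ anyway.
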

%\begin{rem}
%\label{r:mm-restrict}
%As in Theorem~\ref{t:dynamic}, we could replace $\MM(w)$ and $\MM(w\t)$ in \eqref{e:c:dyn} by $\MMt(w)$ and $\MMt(w\t)$ respectively, see \eqref{e:def:MMt}.
%\end{rem}
Corollary~\ref{c:dyn} will be obtained from the following lemma.
\begin{lem}[Space-time reversal]
\label{l:reverse}
For every $u \le v \in \R$, $t \in \R$ and $x,y \in \Z^d$,
$$
p_{u,v}(x,y) = p\t_{t-v,t-u}(y,x).
$$
\end{lem}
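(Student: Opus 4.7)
The plan is to define the bilinear bridge
$$
\Phi(r) := \sum_{z \in \Z^d} p_{u,r}(x,z)\, p\t_{t-v,\,t-r}(y,z), \qquad r \in [u,v],
$$
and show that it is constant in $r$. The endpoint values fall out from $p_{u,u}(x,\cdot) = \1_x$ and $p\t_{t-v,t-v}(y,\cdot) = \1_y$, giving $\Phi(u) = p\t_{t-v,t-u}(y,x)$ and $\Phi(v) = p_{u,v}(x,y)$, so constancy of $\Phi$ is precisely the claimed identity.

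The main computation is to differentiate $\Phi$ in $r$, by combining the forward equation for $p_{u,r}(x,z)$ with a chain-rule reversal of time for the second factor. Since $a\t_{t-r}(e) = a_r(e)$, one has
$$
\partial_r\, p\t_{t-v,t-r}(y,z) = -\sum_{z' \sim z} a_r(\{z,z'\})\bigl(p\t_{t-v,t-r}(y,z') - p\t_{t-v,t-r}(y,z)\bigr).
$$
Substituting both ODEs into $\Phi'(r)$, the diagonal $a_r$-contributions cancel between the two halves, leaving
$$
\Phi'(r) = \sum_{z\,\sim\, z'} a_r(\{z,z'\})\bigl[p_{u,r}(x,z')\, p\t_{t-v,t-r}(y,z) - p_{u,r}(x,z)\, p\t_{t-v,t-r}(y,z')\bigr],
$$
where the sum is over ordered nearest-neighbour pairs. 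Relabelling $z \leftrightarrow z'$ in the second summand and invoking the edge symmetry $a_r(\{z,z'\}) = a_r(\{z',z\})$ reveals that the bracketed expression is antisymmetric in the pair, so the sum vanishes.

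The remaining scruples are routine. Both $p_{u,r}(x,\cdot)$ and $p\t_{t-v,t-r}(y,\cdot)$ are sub-probability measures on $\Z^d$, so the series defining $\Phi(r)$ is absolutely convergent and dominated convergence justifies interchanging $\partial_r$ with the sum. The only caveat is that $r \mapsto a_r$ is merely piecewise continuous, so the identity $\Phi'(r) = 0$ holds only off a locally finite set of jump times; but $\Phi$ is continuous on all of $[u,v]$ (each factor varies continuously in $r$ through its defining integral equation), hence constant. The conceptual content is therefore entirely in the symmetry cancellation above; no genuine analytical obstacle arises.
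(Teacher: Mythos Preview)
Your proof is correct. The bridge interpolation $\Phi(r)$ and the cancellation via the edge symmetry $a_r(\{z,z'\}) = a_r(\{z',z\})$ are both sound, and the endpoint evaluations are right. One small caveat: the union over all edges of the discontinuity times of $r \mapsto a_r(e)$ need not be locally finite (piecewise continuity is only assumed edge by edge). This does not matter: each factor $p_{u,r}(x,z)$ and $p\t_{t-v,t-r}(y,z)$ is Lipschitz in $r$ (the integrand in its defining integral equation is bounded by $4d$), and your own domination bound shows $\sum_z |\partial_r(PQ)(z,r)|$ is uniformly summable, so $\Phi$ is Lipschitz with $\Phi'(r)=0$ for almost every $r$, hence constant.

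The paper argues differently. It rewrites the identity as the adjoint relation $(f_{u,v},g) = (f,g\t_{t-v,t-u})$ for compactly supported test functions, notes that this is immediate when $t \mapsto a_t$ is piecewise constant (then $p_{u,v}$ is a finite product of exponentials of \emph{symmetric} generators, and reversing the order of the product is precisely the time-reversed semigroup), and defers the general case to a finite-volume approximation. Your approach is more direct and self-contained, carrying out the differentiation that underlies the duality rather than reducing to a discrete-time composition; the paper's route is shorter on the page but leans on an approximation argument it does not spell out.
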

\begin{proof}
For $f : \Z^d \to \R$ of compact support, we write
$$
f_{u,v}(x) = \sum_{y \in \Z^d} p_{u,v}(x,y)f(y),
$$
$$
f\t_{u,v}(x) = \sum_{y \in \Z^d} p\t_{u,v}(x,y)f(y).
$$
The lemma is equivalent to the claim that for every $f$, $g$ of compact support,
$$
(f_{u,v} \, , \,  g) = (f \, , \, g\t_{t-v,t-u}),
$$
where $(\cdot,\cdot)$ denotes the scalar product on $L^2(\Z^d)$. That this relation is correct is clear if $t\mapsto a_t$ is piecewise constant. Otherwise, one can proceed via finite-volume approximations.
\end{proof}
\begin{proof}[Proof of Corollary~\ref{c:dyn}]
From the lemma,
$$
p_{t/2,t}(x,0) = p\t_{0,t/2}(0,x).
$$
Hence, a Cauchy-Schwarz inequality gives
$$
p_{0,t}(0,0) = \sum_{x \in \Z^d} p_{0,t/2}(0,x) \, p_{t/2,t}(x,0) \le \sqrt{\mcl{E}_{t/2} \, \mcl{E}\t_{t/2}}
$$
(with obvious notation), and the result thus follows from Theorem~\ref{t:dynamic}.
\end{proof}
We conclude this section by providing a method to build $(w,K)$ out of a dynamic environment $a$, so that $a$ is $(w,K)$-moderate.
\begin{prop}[A criterion for moderation]
\label{p:crit}
There exists $c < \infty$ such that the following holds. Let $k : \R_+ \to \R_+$ be such that $\int (1+t^2) k_t \, \d t < \infty$, let
$$
K_t = k_t + \int_t^{+\infty} s \, k_s \, \d s,
$$
and for a dynamic environment $a$, let
\begin{equation}
\label{e:def:w}
w_t^2 = \int_t^{+\infty} k_{s-t} \, a_s \, \d s.
\end{equation}
Then the dynamic environment $a$ is $(w,cK)$-moderate. 
\end{prop}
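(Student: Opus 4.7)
The plan is to establish the moderation inequality $\|w_t \na u_t\|_2^2 \le c \ov{\mcl{D}}_t$ directly from the definitions. First, by swapping sum and integral in the definition of $w_t$,
$$
\|w_t \na u_t\|_2^2 = \int_t^{+\infty} k_{s-t} \sum_{e \in \B} a_s(e) |\na u_t(e)|^2 \, \d s.
$$
The conceptual obstruction is that the conductance is evaluated at time $s$ while the gradient is at time $t$, so I would introduce the comparison $u_s$ and split
$$
a_s(e)|\na u_t(e)|^2 \le 2 a_s(e) |\na u_s(e)|^2 + 2 a_s(e) |\na(u_t-u_s)(e)|^2.
$$
The first contribution integrates to $2\int_t^{+\infty} k_{s-t}\mcl{D}_s \, \d s \le 2\ov{\mcl{D}}_t$ since $K \ge k$ pointwise by construction.

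For the error term I would discard $a_s \le 1$ and bound $\sum_{e} |\na(u_t - u_s)(e)|^2 \le 4d \|u_t - u_s\|_2^2$ via the elementary inequality $(f(x)-f(y))^2 \le 2(f(x)^2 + f(y)^2)$ together with the fact that each vertex sits on $2d$ edges. Writing $u_s - u_t = \int_t^s \partial_r u_r \, \d r$ and applying Cauchy--Schwarz in $r$ yields $\|u_s - u_t\|_2^2 \le (s-t) \int_t^s \|\partial_r u_r\|_2^2 \, \d r$. The equation $\partial_r u_r = \div(a_r \na u_r)$, combined with $a_r \le 1$ and Cauchy--Schwarz over the $2d$ neighbours of each site, then gives the crude but sufficient bound $\|\partial_r u_r\|_2^2 \le 4d \, \mcl{D}_r$. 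Chaining these estimates produces $\|\na(u_t - u_s)\|_2^2 \lesssim d^2 (s-t) \int_t^s \mcl{D}_r \, \d r$.

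The last step is a Fubini rearrangement:
$$
\int_t^{+\infty} k_{s-t}(s-t) \int_t^s \mcl{D}_r \, \d r \, \d s = \int_t^{+\infty} \mcl{D}_r \int_{r-t}^{+\infty} \sigma \, k_\sigma \, \d \sigma \, \d r,
$$
and by definition of $K$ the inner integral is exactly $K_{r-t} - k_{r-t} \le K_{r-t}$. Consequently the error contribution is also controlled by a dimensional constant times $\ov{\mcl{D}}_t$, and combining with the main term delivers the claim with $c$ of order $d^2$. The crux of the argument, and the only non-routine step, is recognising that the tail $\int_\tau^{+\infty} \sigma \, k_\sigma \, \d \sigma$ baked into $K$ is precisely what absorbs the factor $s-t$ produced by Cauchy--Schwarz on $\|u_s - u_t\|_2^2$; the assumption $\int (1+t^2) k_t \, \d t < \infty$ is exactly what is needed to keep both $\|k\|_1$ and that tail finite, so that $K$ itself is integrable and $\ov{\mcl{D}}_t$ is well defined.
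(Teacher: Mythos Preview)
Your proof is correct and follows essentially the same route as the paper's: expand $\|w_t\na u_t\|_2^2$ via the definition of $w$, split $a_s(\na u_t)^2$ into the main piece $a_s(\na u_s)^2$ and the error $a_s(\na(u_t-u_s))^2$, control the error by writing $u_s-u_t=\int_t^s \partial_r u_r\,\d r$ and using Cauchy--Schwarz together with $\|\partial_r u_r\|_2^2 \lesssim \mcl D_r$, then perform the Fubini swap to recognise the tail $\int_{r-t}^\infty \sigma k_\sigma\,\d\sigma = K_{r-t}-k_{r-t}$. The only difference is cosmetic: you keep track of the dimensional constants explicitly, whereas the paper absorbs everything into~$\lesssim$.
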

\begin{rem}
Recall that in order to get non-trivial bounds from Theorem~\ref{t:dynamic}, we still need to make sure that $w^{-q}$ is integrable for some $q > d$ (the fact that $K \in L^1$ is automatic from the assumption on $k$). This can be obtained if one knows that it is unlikely for $a_s$ to remain close to $0$ during a long time interval.
\end{rem}
\begin{proof}
The condition $\int (1+t^2) k_t \, \d t < \infty$ ensures that $K$ is integrable. We need to estimate
$$
\|w_t \, \na u_t\|_2^2 = \sum_{e \in \B} w_t^2 (\na u_t)^2(e) = \int_t^{+\infty} k_{s-t} a_s (\na u_t)^2(e) \, \d s.
$$
This quantity is bounded by
$$
\int_t^{+\infty} k_{s-t} \underbrace{\sum_{e \in \B} a_s (\na u_s)^2(e)}_{=\mcl{D}_s} \, \d s +  \int_t^{+\infty} k_{s-t}  \sum_{e \in \B}a_s (\na u_s - \na u_t)^2(e) \, \d s.
$$
Up to a constant (and since $a_s \le 1$), the second sum above is bounded by
\begin{eqnarray*}
\sum_{x \in \Z^d} (u_s - u_t)^2(x) & = & \sum_{x \in \Z^d} \Ll(\int_t^s \na^* a_{s'} \na u_{s'}(x) \, \d s'\Rr)^2 \\
& \stackrel{\text{(Jensen)}}{\le} & (s-t) \sum_{x \in \Z^d} \int_t^s \Ll( \na^* a_{s'} \na u_{s'}(x) \Rr)^2 \, \d s' ,
\end{eqnarray*}
which, up to a constant, is bounded by
$$
(s-t)  \int_t^s \sum_{e \in \B} a_{s'} (\na u_{s'}(e))^2 \, \d s' = (s-t)\int_t^s \mcl{D}_{s'} \, \d s'.
$$
We have thus shown that
\begin{eqnarray*}
\|w_t \, \na u_t\|_2^2 & \lesssim & \int_t^{+\infty} k_{s-t} \, \mcl{D}_s \, \d s + \int_t^{+\infty} (s-t) k_{s-t} \int_t^s\mcl{D}_{s'} \, \d s' \, \d s \\
& \lesssim & \int_t^{+\infty} k_{s-t} \, \mcl{D}_s \, \d s + \int_t^{+\infty} \d s' \, \mcl{D}_{s'} \int_{s'}^{+\infty} (s-t) k_{s-t} \, \d s,
\end{eqnarray*}
which is the desired result.
\end{proof}

%
%
%
%
%%%%%%%%%%%%%%%%%%%%%%%%%%%%%%%%%%%%%%%%%%%%%%%%%%%%%%%%%%%%%%
%%%%%%%%%%%%%%%%%%%%%%%%%%%%%%%%%%%%%%%%%%%%%%%%%%%%%%%%%%%%%%
%
%
%
\section{The exclusion process as dynamic random environment}
\label{s:example}
As an example of the results of the previous section, we study a random walk evolving in an environment that is a local function of the symmetric simple exclusion process on $\Z^d$, $d \ge 2$. Recall that the infinitesimal generator $\mcl L$ of the symmetric simple exclusion process is given by
\begin{equation}
\label{e:def:gen}
\mcl L f(\eta) = \sum_{e \in \B} (f(\eta^e) - f(\eta)),
\end{equation}
where $\eta = (\eta_e)_{e \in \B} \in \{0,1\}^\B$, $f$ is a function that depends on a finite number of coordinates of $\eta$, and 
$$
(\eta^e)_b = 
\Ll|
\begin{array}{ll}
1- \eta_e & \text{if } b = e,\\
\eta_e & \text{otherwise}.
\end{array}
\Rr.
$$
This process is also called the Kawasaki dynamics. (Strictly speaking, the exclusion process is thought of as particles performing independent random walks, but with any jump of one particle onto another being suppressed; since the particles are indistinguishable here, this is equivalent to the process defined by \eqref{e:def:gen}.) We refer to \cite[Part~III]{lig} for a thorough study of this process.

For every $\rho \in [0,1]$, let $\mu_\rho = \bigotimes_{\Z^d} \textsf{Bernoulli}(\rho)$. The measures $\mu_\rho$ are the only extremal invariant (and in fact, reversible) measures of the process \cite[Corollary~III.1.11]{lig}. From now on, we fix $\rho < 1$. By stationarity, we can build the probability measure $\P$ such that under this measure, the process~$(\eta_t)_{t \in \R}$ (defined over the whole time line) evolves as an exclusion process, and $\eta_t$ is distributed according to $\mu_\rho$ for every $t \in \R$. 

For concreteness, we define our dynamic environment by
$$
a_t(e) = 
\Ll|
\begin{array}{ll}
1 & \text{if } \eta_t(\un e) = \eta_t(\ov e) = 0, \\
0 & \text{otherwise},
\end{array}
\Rr.
$$
where $\un e$ and $\ov e$ are the end-points of $e$. The specific form of $a$ is not important, as long as it is a function of the \emph{unoriented} edge $e$, and depends only locally on the configuration $\eta$. We let $(p_{s,t}(x,y))_{s \le t \in \R, x,y \in \Z^d}$ be as in Definition~\ref{def:dyn-mod}.
\begin{thm}[Heat kernel estimates]
\label{t:hk}
There exists a random variable $\mcl X$ with finite moments of every order and such that
\begin{equation}
\label{e:energy}
\sum_{x \in \Z^d} \Ll(p_{0,t}(0,x)\Rr)^2 \le \frac{\mcl X}{t^{d/2}}.
\end{equation}
Moreover, there exists a stationary process $(\mcl Y_t)_{t \in \R}$ satisfying
\begin{equation}
\label{e:point-dyn}
p_{0,t}(0,0) \le \frac{\mcl Y_t}{t^{d/2}},
\end{equation}
and such that for every $r > 0$ and $\eps > 0$,
\begin{equation}
\label{e:stat-mom}
\E \Ll[ \Ll( \mcl Y_t \Rr) ^r \Rr]  < \infty,
\end{equation}
\begin{equation}
\label{e:decay-mom}
\E \Ll[ \Ll( \sup_{t \ge 1}  \frac{\mcl Y_t}{t^\eps} \Rr) ^r \Rr]  < \infty.
\end{equation}
\end{thm}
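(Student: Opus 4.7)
The plan is to feed the exclusion process into the general framework of Section~\ref{s:dynamic}: Proposition~\ref{p:crit} produces a weight $w$ from $a$, and Theorem~\ref{t:dynamic} together with Corollary~\ref{c:dyn} reduces Theorem~\ref{t:hk} to moment bounds on $\MM(w)$ and $\MM(w\t)$. I would fix a kernel $k$ with $\int (1+t^2) k_t\,\d t < \infty$, say $k_t = e^{-t}$, and let $K$ and $w$ be as in Proposition~\ref{p:crit}, so that $a$ is $(w,cK)$-moderate. Since $K$ is deterministic, $\mcl C(cK)$ is an absolute constant and all the randomness is concentrated in $\MM(w)$ and $\MM(w\t)$.

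The key estimate is the single-edge moment bound
\[
\sup_{e \in \B,\, t \in \R} \E\!\Ll[w_t(e)^{-q}\Rr] < \infty \qquad \text{for every } q > 0.
\]
Since $w_t^2(e) = \int_0^\infty k_u a_{t+u}(e)\,\d u$ and $a_{t+u}(e)$ vanishes precisely when at least one endpoint of $e$ is occupied at time $t+u$, smallness of $w_t(e)$ forces at least one endpoint of $e$ to remain occupied for most of the effective support of $k$. I would then bring in the only exclusion-specific input used in the whole argument: the probability that a given site is occupied during a fraction close to~$1$ of a time interval of length $T$ decays faster than every inverse polynomial. This follows by ergodicity (the long-time occupation fraction at a site equals $\rho < 1$ almost surely) together with a standard continuous-time large-deviation estimate for the additive functional $\int_0^T \eta_s(x)\,\d s$; the quantitative input from \eqref{e:def:gen} is that, conditional on $\eta(x)=1$, the rate at which site $x$ flips equals $\sum_{y\sim x} \1_{\eta(y)=0}$, with positive mean $2d(1-\rho)$ under $\mu_\rho$. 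I expect this non-persistence estimate to be the main obstacle; it is the only place where the specific structure of the exclusion process enters, consistently with the remark in the introduction that no polynomial mixing rate is used.

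Once the pointwise bound is in hand, the spatial maximal-type inequality underlying Proposition~\ref{p:weak} (applied to $w_t$ at fixed $t$ using spatial stationarity of $a_t$) gives $\E[\M_q(w_t)^r] < \infty$ for every $r>0$ by taking $q$ large enough, uniformly in $t$. To bound $\MM(w)$, I would use that $s\mapsto \M^{-2}(w_s)$ is stationary in $s$ with values in $[0,1]$: the time-average $\tfrac{1}{T}\int_0^T \M^{-2}(w_s)\,\d s$ concentrates around its positive expectation for large $T$ by a standard ergodic/concentration argument, while short-time averages are handled by the pointwise moment bound. This yields moments of every order for $\MM^{2\alpha/\beta}(w)$, so that Theorem~\ref{t:dynamic} delivers \eqref{e:energy} with $\mcl X := C\,\mcl C(cK)\,\MM^{2\alpha/\beta}(w)$.

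For \eqref{e:point-dyn}, I apply Corollary~\ref{c:dyn} to $a$ and the time-reversed environment $a\t$; reversibility of the symmetric exclusion process under $\mu_\rho$ ensures that $a\t$ has the same law as $a$ up to a time shift, so $\MM(w\t)$ satisfies the same moment bounds as $\MM(w)$. Re-indexing the resulting bound via time translations of the underlying process realises $(\mcl Y_t)_{t\in\R}$ as a stationary process satisfying \eqref{e:point-dyn}, and the uniform moment bound \eqref{e:stat-mom} follows. Finally, \eqref{e:decay-mom} is obtained by a dyadic union bound: for $r'>r$, Markov's inequality together with \eqref{e:stat-mom} gives $\P(\sup_{t\in [2^n,2^{n+1}]}\mcl Y_t > \lambda\,2^{n\eps}) \lesssim \lambda^{-r'}2^{-n\eps r'}$, and summation over $n \ge 0$ yields the claim.
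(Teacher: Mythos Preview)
Your overall route coincides with the paper's: build $(w,K)$ via Proposition~\ref{p:crit}, control moments of $\MM(w)$ and $\MM(w\t)$, then apply Theorem~\ref{t:dynamic} and Corollary~\ref{c:dyn}; the final step \eqref{e:decay-mom} via a union bound and Markov is also the paper's. Two sub-steps, however, are handled differently, and in both cases the paper's argument is shorter.

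For the non-persistence input, the paper does not invoke a large-deviation principle for the occupation time. Your proposed LDP is not as ``standard'' as you suggest: the single-site occupation is not Markov, and the flip rate $\sum_{y\sim x}\1_{\eta(y)=0}$ you identify can vanish (all neighbours occupied), so positivity of its mean alone does not deliver the estimate. Instead, the paper proves directly (Lemma~\ref{l:control}) that $\P[\int_0^t a_s(e)\,\d s \le 1] \le C\exp(-t^\kappa)$ for every $\kappa<1$, by an elementary hole-moving argument: in a box of side $\log^\delta t$ there are, with overwhelming probability, at least two empty sites, and one lower-bounds the probability that a prescribed sequence of exchanges brings them onto the endpoints of $e$ within each of $\lfloor t/\log^\delta t\rfloor$ successive time blocks. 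With the polynomial choice $k_t=(1+t)^{-4}$ (rather than your exponential kernel), this yields the stretched-exponential tail $\P[w_t(e)\le u]\le C\exp(-u^{-1/9})$ and hence all inverse moments of $w_t(e)$.

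For the passage from $\M(w_s)$ to $\MM(w)$, you propose an ergodic/concentration argument for the time average of $\M^{-2}(w_s)$. The paper replaces this by a one-line observation (Remark~\ref{r:inverses}): Jensen's inequality gives $\MM^2(w)=\big(\inf_T T^{-1}\!\int_0^T \M^{-2}(w_s)\,\d s\big)^{-1}\le \sup_T T^{-1}\!\int_0^T \M^{2}(w_s)\,\d s$, a temporal maximal function whose moments are controlled by Corollary~\ref{c:Lp} (the $L^p$ maximal inequality, which is what you need rather than the weak-type Proposition~\ref{p:weak}) once $\M(w_s)$ has all moments. Finally, your dyadic bound for \eqref{e:decay-mom} still requires reducing the continuum supremum to a countable one; the paper does this via the crude comparison $\MM(w^{(u)})\le 2\,\MM(w^{(t)})$ for $u\in[t-1,t]$ and then sums over integers.
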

\begin{rem}
We believe that $\sup_t t^{d/2} p_{0,t}(0,0)$ should have finite moments of every order, but proving this would require more work, so we do not pursue this question further here.
\end{rem}
We begin by showing that $a_t(e)$ is unlikely to remain equal to $0$ for a long period of time. 
\begin{lem}
\label{l:control}
For every $\kappa < 1$, there exists $C < \infty$ such that
$$
\P \Ll[ \int_0^t a_s(e) \, \d s \le 1 \Rr]  \le C \exp \Ll( -t^\kappa \Rr) .
$$
\end{lem}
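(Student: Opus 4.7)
The strategy is to use the interchange-process (``stirring'') representation of the symmetric simple exclusion process, which separates the randomness of the initial configuration $\eta_0$ from that of the dynamics. In this construction one attaches independent rate-$1$ Poisson processes $(\mcl{N}^f)_{f \in \B}$ to the edges and, at each firing of $\mcl{N}^f$, swaps the values of $\eta$ at the two endpoints of $f$ unconditionally; since swapping two equal values has no effect, this reproduces the exclusion dynamics. Denoting by $\sigma_s$ the resulting random permutation of $\Z^d$ at time $s$, we have $\eta_s(z) = \eta_0(\sigma_s(z))$ with $(\sigma_s)_{s \ge 0}$ independent of $\eta_0$; moreover, for any fixed $z$, the process $s \mapsto \sigma_s(z)$ describing the trajectory of the label originating at $z$ is a rate-$2d$ continuous-time simple random walk on $\Z^d$ started from $z$.

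Setting $e = \{x,y\}$ and
\[
Z := \int_0^t a_s(e) \, \d s = \int_0^t \1_{\eta_0(\sigma_s(x)) = 0}\,\1_{\eta_0(\sigma_s(y)) = 0} \, \d s,
\]
condition on $(\sigma_s)_{s \in [0,t]}$. Then $Z$ becomes a measurable function of the i.i.d.\ Bernoulli$(\rho)$ family $(\eta_0(z))_{z \in \Z^d}$; since $\sigma_s$ is a bijection, $\sigma_s(x) \ne \sigma_s(y)$, hence $\E[Z \mid \sigma] = (1-\rho)^2 t$, and flipping the single coordinate $\eta_0(z)$ alters $Z$ by at most
\[
L(z) := \int_0^t \bigl(\1_{\sigma_s(x) = z} + \1_{\sigma_s(y) = z}\bigr) \, \d s.
\]
The bounded-differences inequality therefore yields, conditionally on $\sigma$,
\[
\P\bigl[Z \le (1-\rho)^2 t - \lambda \,\big|\, \sigma \bigr] \le \exp\!\Ll(-c\,\frac{\lambda^2}{\Sigma(\sigma)}\Rr), \qquad \Sigma(\sigma) := \sum_{z \in \Z^d} L(z)^2,
\]
for a universal constant $c > 0$.

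Expanding the square, $\Sigma(\sigma)$ is a double integral over $[0,t]^2$ of a bounded combination of indicators of the form $\1_{\sigma_s(x) = \sigma_{s'}(x)}$ and $\1_{\sigma_s(x) = \sigma_{s'}(y)}$. Summing over the common label $z$ and applying the Markov property of each label walk $X^z$ (a genuine simple random walk, as observed above) yields the identity $\P[\sigma_s(x) = \sigma_{s'}(x)] = p_{|s-s'|}(0)$, where $p_r$ is the transition kernel of the rate-$2d$ simple random walk on $\Z^d$; the cross term is treated identically. The Gaussian bound $p_r(0) \lesssim (1+r)^{-d/2}$ then gives $\E \Sigma(\sigma) \lesssim t\bigl(1 + \log t \cdot \1_{d = 2}\bigr)$, and classical moment bounds on self-intersection local times of symmetric simple random walks supply $\E[\Sigma(\sigma)^k] \le (C_d \, t \log^+\! t)^k$ for every integer $k \ge 1$.

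To conclude, fix $\lambda := (1-\rho)^2 t - 1$ and $R := t^{2-\kappa}$, and split
\[
\{Z \le 1\} \subset \{\Sigma(\sigma) > R\} \cup \{Z \le (1-\rho)^2 t - \lambda,\ \Sigma(\sigma) \le R\}.
\]
The second event has probability at most $\exp(-c \lambda^2/R) \le \exp(-c' t^\kappa)$ by bounded differences, while Markov's inequality applied at order $k := \lceil C' t^\kappa / \log t \rceil$ gives $\P[\Sigma(\sigma) > R] \le (Ct \log t / R)^k \le \exp(-t^\kappa)$ for $t$ large enough. The principal difficulty is the high-moment control of $\Sigma(\sigma)$, which encodes the self-overlap of the coupled label walks started at $x$ and $y$: for $d \ge 3$ this follows directly from the transience of the walks (subexponential tails of local times), whereas for $d = 2$ one has to invoke the classical large-deviation theory for self-intersection local times around their typical size $t \log t$. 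Since the lemma only requires a stretched-exponential bound for any $\kappa < 1$, polynomial-in-$k$ moment growth is ample.
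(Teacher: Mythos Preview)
Your approach via the stirring representation and McDiarmid's inequality is genuinely different from the paper's, and the reduction to a tail bound on the self--intersection quantity $\Sigma(\sigma)=\sum_z L(z)^2$ is a nice idea. The paper instead gives a completely elementary argument: it partitions $[0,t]$ into $\lfloor t/\log^\delta t\rfloor$ blocks, observes that at the start of each block a box of radius $\log^\delta t$ around $e$ almost surely contains at least two holes (this costs only $\exp(-c\log^{\delta d} t)$ since $\eta_s\sim\mu_\rho$ with $\rho<1$), and then lower--bounds the conditional probability that two such holes migrate onto the endpoints of $e$ and stay for one unit of time by $\exp(-C\log^\delta t)$. The Markov property turns this into a product over blocks, yielding the stretched exponential for every $\kappa<1$ with no input beyond the product structure of $\mu_\rho$ and the Markov property.

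Your argument, by contrast, has a real gap at the moment bound. The assertion $\E[\Sigma(\sigma)^k]\le (C_d\,t\log^+ t)^k$ with $C_d$ independent of $k$ cannot hold: it would force $\Sigma(\sigma)\le C_d\,t\log^+ t$ almost surely, whereas the event that the label walk stays at the origin throughout $[0,t]$ has probability $e^{-2dt}$ and gives $\Sigma(\sigma)\ge t^2$. A realistic bound is at best $\E[\Sigma^k]\le (C k\,t\log^+ t)^k$ (sub--exponential tails), and plugging this into your Markov step with $R=t^{2-\kappa}$, $k\asymp t^\kappa/\log t$ produces $(C t^{2\kappa-1})^k$, which is useless once $\kappa\ge 1/2$. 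To reach every $\kappa<1$ you would need the genuine large--deviation statement $\P[\Sigma>C t]\le e^{-ct}$ in $d\ge 3$ (and its $d=2$ analogue with $t\log t$), choose $R$ of that order, and then both the McDiarmid term $\exp(-c t^2/R)$ and the tail term are $\exp(-ct)$ (respectively $\exp(-ct/\log t)$). These LDP inputs are available in $d\ge 3$ but are far from the ``polynomial-in-$k$ moment growth'' you invoke, and in $d=2$ they are delicate results in their own right. So as written the proof does not deliver the full range $\kappa<1$; the paper's low--tech block argument avoids all of this.
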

\begin{proof}
Throughout the proof, the value of the constants $c > 0$ and $C < \infty$ may vary in each occurence. Let $\delta \in (1/d,1)$, and denote by $A_{s,t}$ the event 
$$
\text{there is at most one } x \in B_{\log^{\delta} t} \text{ s.t. }  \eta_s(x) = 0 .
$$
For every $s \in \R$ and $t \ge 2$,
\begin{equation}
\label{e:wholes}
1-\P \Ll[ A_{s,t}\Rr] \le C \exp \Ll(- c\log^{\delta d} t \Rr) .
\end{equation}
Hence,
$$
1-\P \Ll[ \forall k \text{ integer, } 0 \le k \le \frac{t}{\log^\delta t}-1, \ A_{k \log^\delta t,t} \text{ holds}\Rr] \le C\exp \Ll(- c\log^{\delta d} t \Rr).
$$
Let $e$ be an edge adjacent to the origin. Conditionally on $A_{0,t}$, we can estimate the probability that $a_s(e)$ becomes $1$ by considering a particular choice of moves that bring the two holes in $B_{\log^{\delta} t}$ onto the two endpoints of $e$, and then asking that they stay put for one unit of time. More precisely, for every $t \ge 2$,
$$
\P \Ll[ \int_0^{\log^\delta t} a_s(e) \ \d s \le 1 \ \big| \ A_{0,t} \Rr] \le 1- \exp\Ll(-C \log^\delta t\Rr)
$$
(the bound being very crude). By the Markov property,
\begin{multline*}
\P \Ll[ \int_0^t a_s(e) \, \d s \le 1 \Rr]  \le C\exp \Ll(- c\log^{\delta d} t \Rr) + \Ll[ 1-\exp\Ll(-C \log^\delta t\Rr) \Rr]^{\Ll\lfloor \frac{t}{\log^\delta t}\Rr\rfloor} \\
 \le C\exp \Ll(- c\log^{\delta d} t \Rr) + \exp \Ll(\Ll\lfloor \frac{t}{\log^\delta t}\Rr\rfloor \log \Ll[ 1-\exp\Ll(-C \log^\delta t\Rr) \Rr]\Rr). 
\end{multline*}
The first term in the sum above is controlled since $\delta > 1/d$. For the second term, the fact that $\delta < 1$ ensures that the logarithmic term decays to $0$ slower than any negative power of $t$, so the proof is complete.
\end{proof}
\begin{proof}[Proof of Theorem~\ref{t:hk}]
We let $k_t = (1+t)^{-4}$, and $K$, $w$ be defined as in Proposition~\ref{p:crit}, so that the dynamic environment $a$ is $(w,cK)$-moderate. For every $u \in (0,1)$,
\begin{align*}
\P[w_t(e) \le u] & = \P\Ll[\int_0^\infty k_{s} a_s(e) \, \d s \le \sqrt{u}\Rr]  \\
& \le  \P\Ll[(1+t)^{-4} \int_0^t  a_s(e) \, \d s \le \sqrt{u}\Rr],
\end{align*}
for arbitrary $t \ge 0$. We choose $t$ such that $(1+t)^{-4} = \sqrt{u}$, and apply Lemma~\ref{l:control} to obtain 
\begin{equation}
\label{e:control-tail}
\P[w_t(e) \le u] \le C \exp(-u^{-1/9}).
\end{equation}
In particular, $w_t(e)^{-1}$ has finite moments of every order. By Corollary~\ref{c:Lp}, the maximal function $\M(w_t)$ has finite moments of every order. The same property holds for $\MM(w)$ itself by Remark~\ref{r:inverses}. (Strictly speaking, Remark~\ref{r:inverses} applies only to processes indexed by a discrete time, but it is not difficult to check that this is sufficient for our purpose.) By Theorem~\ref{t:dynamic}, we thus obtain \eqref{e:energy} (the choice of $q \in (d,\infty)$ and of $\alpha, \beta, \gamma$ as in Theorem~\ref{t:nash} is arbitrary). Recall that we denote the time reversals around time $t$ of $a$ and $w$ by $a\t$ and $w\t$ respectively. Clearly, $a\t$ is $(w\t, cK)$-moderate, so Corollary~\ref{c:dyn} yields that
$$
p_{0,t}(0,0) \le \frac{\mcl Y_t}{t^{d/2}},
$$
with
$$
\mcl Y_t = C \Ll( \MM(w)\MM(w\t) \Rr)^{\alpha/\beta}
$$
for some constant $C < \infty$. Since $w\t$ and $w$ have the same law, every moment of $\MM(w\t)$ is finite, and \eqref{e:stat-mom} is proved. In order to obtain \eqref{e:decay-mom}, it suffices to consider a supremum over integer times, since for any $u \in [t-1,t]$, we have $\MM(w^{(u)}) \le 2 \MM(w\t)$. We then note that for any $r > 1/\eps$ and $y > 0$,
$$
\P \Ll[ \sup_{n \in \N \setminus \{0\}} n^{-\eps} \mcl Y_n \ge y\Rr] \le \sum_{n = 1}^{+\infty} \frac{\E\Ll[ \Ll(\mcl Y_n\Rr)^r\Rr]}{(yn^{\eps})^r} \le \frac{C}{y^r},
$$
so that \eqref{e:decay-mom} follows.
\end{proof}

\appendix

\section{Maximal inequalities}

In this appendix, we recall classical properties of maximal functions in a multi-dimensional setting. Let $(\Omega, \mcl F, \P)$ be a probability space,  and $(\theta_x)_{x \in \Z^d}$ be a measure-preserving action of $\Z^d$ on this space. For every measurable function $f : \Omega \to \R$, we define the maximal function
$$
\Ma f(\o) = \sup_{r \in \N} \frac{1}{|B_r|} \sum_{x \in B_r} f(\theta_x \o).
$$
The following result is \cite[Theorem~3.2]{akckre}.
\begin{prop}[Weak type (1,1) estimate]
\label{p:weak}
For every $f \in L^1(\Omega)$ and $\lambda > 0$,
$$
\P \Ll[ \Ma f \ge \lambda \Rr] \le \frac{3^d \, \|f\|_{L^1(\Omega)}}{\lambda}.
$$
\end{prop}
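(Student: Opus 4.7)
The plan is to deduce the proposition from the classical Hardy--Littlewood weak $(1,1)$ inequality on $\Z^d$ by a transference (Calder\'on) argument, with the constant $3^d$ arising from a Vitali covering lemma for $\ell^\infty$-balls. Throughout I may replace $f$ by $|f|$ and so assume $f \ge 0$, since $\Ma f \le \Ma |f|$.

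The first step is a purely combinatorial lattice inequality: for any finitely supported $g : \Z^d \to \R_+$,
$$
\Ll| \Ll\{ x \in \Z^d : Mg(x) \ge \lambda \Rr\} \Rr| \le \frac{3^d}{\lambda} \|g\|_{\ell^1(\Z^d)}, \qquad Mg(x) := \sup_{r \in \N} \frac{1}{|B_r|} \sum_{y \in B_r(x)} g(y).
$$
To prove it, for each $x$ in the level set I fix a witnessing cube $B_{r(x)}(x)$ and greedily extract a pairwise disjoint subfamily $\{B_{r_i}(x_i)\}$, always picking a cube of maximal radius among those not meeting an earlier choice. The geometric observation that $B_{r'}(y) \cap B_r(x) \ne \emptyset$ and $r' \le r$ force $B_{r'}(y) \subset B_{3r}(x)$, together with $|B_{3r}(x)| = (6r+1)^d \le 3^d |B_r(x)|$, shows that the tripled cubes cover the level set. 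The claimed bound follows by summing the $\ell^1$ masses on the disjoint cubes and dividing by $\lambda$.

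The second step is transference to the dynamical system. Fix $R \in \N$ and introduce the truncation $\Ma_R f := \sup_{r \le R} |B_r|^{-1} \sum_{y \in B_r} f \circ \theta_y$. For $N \ge R$ and $\omega \in \Omega$, set $g_\omega(y) := f(\theta_y \omega) \, \1_{B_N}(y)$. If $x \in B_{N-R}$ satisfies $\Ma_R f(\theta_x \omega) \ge \lambda$, then the witnessing cube $B_r(x)$ of radius $r \le R$ lies inside $B_N$, so $Mg_\omega(x) \ge \lambda$. Applying the lattice inequality to $g_\omega$, integrating against $\P$, and invoking $\E[f \circ \theta_y] = \E[f]$ yields
$$
|B_{N-R}| \, \P[\Ma_R f \ge \lambda] \le \E \sum_{x \in \Z^d} \1_{\{Mg_\omega(x) \ge \lambda\}} \le \frac{3^d}{\lambda}\, |B_N| \, \E[f].
$$
Dividing by $|B_{N-R}|$ and letting $N \to \infty$, with $|B_N|/|B_{N-R}| \to 1$, gives $\P[\Ma_R f \ge \lambda] \le 3^d \lambda^{-1} \|f\|_{L^1(\Omega)}$.

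Finally, since $\Ma_R f \uparrow \Ma f$ pointwise, the events $\{\Ma_R f > \lambda'\}$ increase to $\{\Ma f > \lambda'\}$ for every $\lambda' < \lambda$, so $\P[\Ma f > \lambda'] \le 3^d (\lambda')^{-1} \|f\|_{L^1}$, and letting $\lambda' \uparrow \lambda$ produces the stated inequality. I do not anticipate a substantive obstacle: the constant $3^d$ is dictated entirely by the cube Vitali lemma, and every other manipulation amounts to ratios of cube cardinalities tending to $1$. The only mild care required is in introducing the truncation $\Ma_R$ so that the witnessing cubes fit inside $B_N$ and in ordering the two limits $N \to \infty$ and $R \to \infty$ correctly.
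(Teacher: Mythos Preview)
Your argument is correct: the Vitali covering on $\Z^d$ with $\ell^\infty$-cubes gives exactly the constant $3^d$, the Calder\'on transference step is carried out cleanly (the truncation $\Ma_R$ is precisely what is needed so that witnessing cubes for points of $B_{N-R}$ stay inside $B_N$), and the two limits $N\to\infty$, $R\to\infty$ are taken in the right order with the $\lambda'\uparrow\lambda$ cleanup handling the strict versus non-strict inequality. The paper, however, does not prove this proposition at all; it simply quotes it as \cite[Theorem~3.2]{akckre}. So rather than a different route, you have supplied a self-contained proof where the paper is content with a reference. What your write-up buys is independence from the literature and a transparent explanation of the constant $3^d$; what the citation buys is brevity in an appendix devoted to background.
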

Obviously, the maximal function defines a bounded operator from $L^\infty$ to $L^\infty$. By the Marcinkiewicz interpolation theorem (see e.g.\ \cite[Appendix~D]{taylor}), we thus have the following.
\begin{cor}[$L^p$ estimate]
\label{c:Lp}
For every $p \in (1,\infty]$, there exists $C_p < \infty$ such that
$$
\|\Ma f\|_{L^p(\Omega)} \le C_p \|f\|_{L^p(\Omega)}.
$$
\end{cor}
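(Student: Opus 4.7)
The plan is exactly the one outlined in the sentence preceding the statement: combine the weak type $(1,1)$ bound from Proposition~\ref{p:weak} with a trivial $L^\infty$ bound, then invoke the Marcinkiewicz interpolation theorem.

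First I would record the endpoint $L^\infty$ inequality. Since $\Ma f$ is a supremum of averages of $|f(\theta_x\,\cdot)|$, for every $r \in \N$ and every $\omega \in \Omega$,
$$
\frac{1}{|B_r|} \sum_{x \in B_r} |f(\theta_x \omega)| \le \|f\|_{L^\infty(\Omega)},
$$
so taking the supremum over $r$ yields $\|\Ma f\|_{L^\infty(\Omega)} \le \|f\|_{L^\infty(\Omega)}$. This already settles the case $p = \infty$ with constant $C_\infty = 1$, and in particular shows $\Ma$ is of weak type $(\infty,\infty)$. The other endpoint, weak type $(1,1)$ with constant $3^d$, is exactly the content of Proposition~\ref{p:weak}.

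Next I would verify sublinearity of the operator $f \mapsto \Ma |f|$, which is the hypothesis needed to apply Marcinkiewicz. The triangle inequality $|f+g| \le |f|+|g|$ together with linearity of each average gives, for every $r$ and $\omega$,
$$
\frac{1}{|B_r|} \sum_{x \in B_r} |(f+g)(\theta_x \omega)| \le \frac{1}{|B_r|} \sum_{x \in B_r} |f(\theta_x \omega)| + \frac{1}{|B_r|} \sum_{x \in B_r} |g(\theta_x \omega)| \le \Ma f(\omega) + \Ma g(\omega),
$$
so passing to the supremum yields $\Ma(f+g) \le \Ma f + \Ma g$; positive homogeneity is immediate. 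With sublinearity and the two weak-type endpoints in hand, the Marcinkiewicz interpolation theorem in the form of \cite[Appendix~D]{taylor} produces for each $p \in (1,\infty)$ a constant $C_p < \infty$ such that $\|\Ma f\|_{L^p(\Omega)} \le C_p \|f\|_{L^p(\Omega)}$, and combining with the $p=\infty$ case gives the statement. There is essentially no substantive obstacle here: the proof is a direct concatenation of two cited inputs, with only the routine verification of sublinearity required in between.
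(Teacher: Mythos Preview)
Your proposal is correct and follows exactly the route the paper indicates: the trivial $L^\infty$ bound together with the weak type $(1,1)$ estimate of Proposition~\ref{p:weak}, fed into the Marcinkiewicz interpolation theorem. You simply spell out the sublinearity verification and the $L^\infty$ endpoint that the paper leaves implicit; there is no substantive difference.
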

\begin{rem}
\label{r:inverses}
If we let
$$
\mi g(\o) = \inf_{r \in \N} \frac{1}{|B_r|} \sum_{x \in B_r} g(\theta_x \o),
$$
then for every $p \in (1,\infty]$ and $g > 0$, we also have
$$
\|\Ll(\mi g\Rr)^{-1}\|_{L^p(\Omega)} \le C_p \|g^{-1}\|_{L^p(\Omega)},
$$
since by Jensen's inequality, $\Ll(\mi g\Rr)^{-1} \le \Ma(g^{-1})$.
\end{rem}

\bibliographystyle{abbrv}
\bibliography{Nash}

\end{document}